\theoremstyle{problems}
\newtheorem{theorem}{Theorem}[section]
\newtheorem{lemma}[theorem]{Lemma}
\theoremstyle{definition}
\theoremstyle{remark}
\numberwithin{equation}{section}
\renewcommand{\phi}{\varphi}
\newcommand{\bl}{\mbox{$\lambda\kern-0.53em\lambda$}}
\newcommand{\bmu}{\mbox{$\mu\kern-0.55em\mu$}}
\newcommand{\bnu}{\mbox{$\nu\kern-0.51em\nu$}}
\def\bphi{\mbox{$\varphi\kern-0.59em\varphi$}}
\def\C{\mathbb C}
\def\Di{\mathbb D}
\renewcommand{\le}{\leqslant}
\def\пїЅпїЅ{\symbol{"BE}}
\def\пїЅпїЅ{\symbol{"BF}}
\newcommand{\Tor}{\mathop{\rm Tor}\nolimits}
\newcommand{\zk}{\mathcal Z_{\mathcal K}}
\begin{document}

\title{Pontryagin algebras of some moment-angle-complexes}
\author{Yakov Veryovkin} 
\address{Departments of Mathematics and Mechanics, Moscow State University.}
\email{verevkin\_j.a@mail.ru}

\thanks{The research was carried out at the Steklov Institute of Mathematics RAS and supported by the Russian Science Foundation (project no.~14-11-00414).}


\maketitle

\section{Introduction}
In this paper we consider the problem of describing the Pontryagin algebra (loop homology) of moment-angle complexes and manifolds. The moment-angle complex $ \mathcal Z_\mathcal{K} $ is a cell complex built of products of polydiscs and tori parametrised by simplices in a finite simplicial complex $\mathcal K$. It has a natural torus action and plays an important role in toric topology~\cite {bupa}. In the case when $ \mathcal K $ is a triangulation of a sphere, $ \mathcal Z_\mathcal {K} $ is a topological manifold, which has interesting geometric structures.

Generators of the Pontryagin algebra $ H _* (\varOmega \mathcal Z_\mathcal {K}) $ when $ \mathcal K $ is a flag complex were described in~\cite{g-p-t-w12}. Describing relations is often a difficult problem, even when $\mathcal{K}$ has a few vertices. Here we describe these relations in the case when $\mathcal K$ is the boundary of pentagon or hexagon. In this case, it is known that $ \mathcal Z_\mathcal {K} $ is a connected sum of products of spheres with two spheres in each product (see McGavran~\cite{mcga79}, and also~\cite{gi-lo09},~\cite{bo-me06}). Therefore $ H _* (\varOmega \mathcal Z_\mathcal {K}) $ is a one-relator algebra and we describe this one relation explicitly, therefore giving a new homotopy-theoretical proof of McGavran's result. An interesting feature of our relation is that it includes iterated Whitehead products, which vanish under the Hurewicz homomorphism. Therefore, the form of this relation cannot be deduced solely from the result of McGavran.

I wish to express gratitude to my advisor Professor Taras Evgenievich Panov for his help, attention and invaluable advice. I also thank Ivan Limonchenko for stimulation discussions.

\section{Preliminaries}

Let $ \mathcal K $ be a \emph{simplicial complex} on the set $ [m] = \{1,2,3, \dots, m \} $, i.e. $ \mathcal K $ is a collection of subsets $ I \subset [m] $ closed under inclusion. Subsets $ I \in \mathcal K $ are called \emph{simplices}. Simplicial complex is \emph{flag} if any minimal non-simplex consists of two elements. In other words, $ \mathcal K $ is a flag complex if every set of vertices pairwise connected by edges generates a simplex.

For any $ I \subset [m] $ define the \emph{full subcomplex} (the restriction of $ \mathcal {K} $ to $ I $):
\[
\mathcal{K}_I = \{J \in \mathcal K \colon J \subset I\}.
\]

Let $ k $ be a commutative ring with unit. The \emph{face ring} of a simplicial complex $ \mathcal K $ is the quotient of the graded polynomial ring  $k[v_1, \ldots, v_m]$ by the monomial ideal generated by non-simplices:
\[
k[\mathcal K] = k[v_1, \ldots, v_m] \big/ \bigl(v_{i_1}\!\ldots v_{i_k} \colon \{i_1, \ldots, i_k\} \notin \mathcal K \bigr),\quad \deg v_i = 2.
\]


Consider the unit polydisc in the complex space:
\[
  \Di^m=\bigl\{ (z_1,\ldots,z_m)\in\C^m\colon |z_i|^2\le1,\quad i=1,\ldots,m
  \bigr\}.
\]
For each subset $I = \{i_1, \cdots, i_k\} \subset [m]$ define the following subset of the polydisc:
\[
  B_I=\bigl\{(z_1,\ldots,z_m)\in\Di^m\colon |z_i|^2=1
 \quad \text{if }i\notin I \bigr\}
\]

The \emph{moment-angle complex} corresponding to the simplicial complex $ \mathcal K $ is the space
\[
  \zk=\bigcup_{I \in\mathcal K}B_I\subset\Di^m,\\
\]
where the union is taken in $ \Di ^ m $. Similarly, consider the subset:
\[
(\mathbb{C}P^\infty)^\mathcal{K}=\bigcup_{I \in \mathcal K}BT^I \subset BT^m
\]
where $BT^m=(\mathbb{C}P^\infty)^m$, $ BT^I=\bigl\{(x_1,\ldots,x_m)\in BT^m\colon x_i=*
  \text{ if }i\notin I\bigr\}$.

We consider homology and cohomology with coefficients in the ring $k$ and do not specify it in the notation.

\begin{theorem}[\cite{bupa}]\label{hzk}
There are isomorphisms of rings:
\[
\begin{aligned}
H^*((\mathbb{C}P^\infty)^\mathcal{K}) & \cong k[\mathcal{K}],\\
H^*(\mathcal{Z}_\mathcal{K}) & \cong \Tor_{k[v_1, \ldots, v_m]}(k[\mathcal{K}], k) \cong H\bigl{(}\Lambda [u_1, \ldots, u_m] \otimes k[\mathcal{K}], d \bigr{)},\\
H^p(\zk) & \cong \bigoplus_{I \subset [m]} \widetilde{H}^{p - |I| - 1} (\mathcal{K}_I),
\end{aligned}
\]
where $k[\mathcal{K}]$ is the face ring, $du_i = v_i,\; dv_i = 0, \quad \mathop{\mathrm{deg}} u_i = 1, \; \mathop{\mathrm{deg}} v_i = 2$.
\end{theorem}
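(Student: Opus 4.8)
The plan is to establish the three displayed isomorphisms in turn, the second being the heart of the matter. For the first, $H^*((\C P^\infty)^{\mathcal K})\cong k[\mathcal K]$, I would work with the standard product CW structure on $(\C P^\infty)^m$, whose cells lie in even dimensions and are indexed by multidegrees $\alpha\in\Z_{\ge 0}^m$, and observe that $(\C P^\infty)^{\mathcal K}=\bigcup_{I\in\mathcal K}BT^I$ is the subcomplex consisting exactly of the cells whose supporting set $\{i:\alpha_i\neq0\}$ is a simplex of $\mathcal K$. Since all cells are even-dimensional, the cellular cochain complexes carry zero differential, so the restriction map $H^*((\C P^\infty)^m)=k[v_1,\dots,v_m]\to H^*((\C P^\infty)^{\mathcal K})$ is surjective, and its kernel is the span of the monomials $v^\alpha$ whose supporting set is not a simplex. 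By multiplicativity of the cellular diagonal approximation (compatible on the subcomplex), this map is a ring homomorphism, and its kernel is precisely the Stanley--Reisner ideal; hence $H^*((\C P^\infty)^{\mathcal K})\cong k[\mathcal K]$ as rings.

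For the second isomorphism I would use that $(\C P^\infty)^{\mathcal K}$ is the Borel construction of $\zk$: on each piece $B_I\cong(D^2)^I\times(S^1)^{[m]\setminus I}$ the torus $T^I$ acts on the disc factors, contracting them equivariantly onto a fixed point, while $T^{[m]\setminus I}$ acts freely on the circle factors, so $ET^m\times_{T^m}B_I\simeq BT^I$; these equivalences are compatible over the face poset, whence $ET^m\times_{T^m}\zk\simeq(\C P^\infty)^{\mathcal K}$ and there is a fibration $\zk\to(\C P^\infty)^{\mathcal K}\to BT^m$. The Eilenberg--Moore spectral sequence of the pullback of the path fibration over $BT^m$ along $(\C P^\infty)^{\mathcal K}\to BT^m$ then has
\[
E_2=\Tor_{H^*(BT^m)}\bigl(k,\,H^*((\C P^\infty)^{\mathcal K})\bigr)=\Tor_{k[v_1,\dots,v_m]}\bigl(k,\,k[\mathcal K]\bigr)\ \Longrightarrow\ H^*(\zk),
\]
using the first step and $H^*(BT^m)=k[v_1,\dots,v_m]$. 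Since the Koszul complex $\bigl(\Lambda[u_1,\dots,u_m]\otimes k[v_1,\dots,v_m],d\bigr)$ is a free resolution of $k$ over $k[v_1,\dots,v_m]$, tensoring it with $k[\mathcal K]$ identifies this $\Tor$ with the cohomology of $\bigl(\Lambda[u_1,\dots,u_m]\otimes k[\mathcal K],d\bigr)$, giving the third expression in the statement. The main obstacle is to show that the spectral sequence degenerates at $E_2$ and that the resulting filtration on $H^*(\zk)$ splits \emph{multiplicatively}, so that one gets a ring isomorphism rather than only an additive one; I would deduce this from the formality of $(\C P^\infty)^{\mathcal K}$, i.e. from the cochain-level quasi-isomorphism exhibited in the first step between the cellular cochain algebra of $(\C P^\infty)^{\mathcal K}$ and the ring $k[\mathcal K]$ with zero differential, which lets one take the Koszul complex $\Lambda[u_1,\dots,u_m]\otimes k[\mathcal K]$, with its natural product, as a model computing $H^*(\zk)$ as an algebra.

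For the third isomorphism, note that $\Lambda[u_1,\dots,u_m]\otimes k[\mathcal K]$ carries a $\Z^m$-grading, with $u_i$ and $v_i$ both of multidegree the $i$-th coordinate vector, preserved by $d$; hence $\Tor_{k[v_1,\dots,v_m]}(k,k[\mathcal K])$ is the direct sum of the cohomologies of its multidegree components. Inspecting the Koszul differential, the component in a non-squarefree multidegree is acyclic (a contracting homotopy is visible once one picks a coordinate whose exponent is at least two), whereas the component in a squarefree multidegree $\alpha$ with supporting set $I\subset[m]$ has as a basis the elements $\bigl(\prod_{i\in I\setminus\tau}u_i\bigr)\bigl(\prod_{i\in\tau}v_i\bigr)$ for $\tau\in\mathcal K_I$, and under the correspondence $\tau\mapsto(\text{the }(|\tau|-1)\text{-simplex }\tau)$ together with the degree shift $p\mapsto p-|I|-1$ this component becomes exactly the augmented (reduced) simplicial cochain complex of the full subcomplex $\mathcal K_I$. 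Taking cohomology and summing over all $I\subset[m]$ yields $H^p(\zk)\cong\bigoplus_{I\subset[m]}\widetilde H^{\,p-|I|-1}(\mathcal K_I)$. I expect the sign bookkeeping and the identification of squarefree components with simplicial cochains to be routine; the genuinely delicate point remains the multiplicative degeneration used in the second step.
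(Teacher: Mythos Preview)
The paper does not give its own proof of this theorem: it is quoted from \cite{bupa} and used as a black box, so there is nothing in the present paper to compare your argument against. Your sketch is correct and is essentially the standard line of proof developed in that reference.

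One remark worth making, since you flag the multiplicative degeneration of the Eilenberg--Moore spectral sequence as ``genuinely delicate'': in \cite{bupa} the ring isomorphism $H^*(\zk)\cong H\bigl(\Lambda[u_1,\dots,u_m]\otimes k[\mathcal K],d\bigr)$ is also obtained by a direct cellular argument, exhibiting a quasi-isomorphism of dg-algebras between the Koszul complex and the cellular cochain algebra of $\zk$ for the very cell decomposition (cells $D_i$, $S_i$) that this paper uses later. That route bypasses the spectral-sequence subtlety entirely and is arguably more elementary than the formality argument you outline, though both lead to the same conclusion.
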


\begin{theorem}[\cite{bupa}]
The moment-angle complex $ \mathcal Z_ \mathcal {K} $ is the homotopy fibre of the inclusion $(\mathbb{C}P^\infty)^\mathcal{K} \rightarrow (\mathbb{C}P^\infty)^m$.
\end{theorem}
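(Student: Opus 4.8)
The plan is to exhibit $(\mathbb{C}P^\infty)^\mathcal{K}$ as the Borel construction (homotopy quotient) of the $\T^m$-space $\zk$ and then invoke the tautological fibration of a homotopy quotient over the classifying space. Recall that $\T^m=(S^1)^m$ acts on $\Di^m$ coordinatewise, and since $|z_i|$ is invariant this action preserves each $B_I$, hence preserves $\zk=\bigcup_{I\in\mathcal{K}}B_I$. Fix the model $ES^1=S^\infty$ (the unit sphere in $\C^\infty$ with $S^1$ acting by scalars), so $BS^1=ES^1/S^1=\mathbb{C}P^\infty$, and put $E\T^m=\prod_{i=1}^m ES^1$, $B\T^m=E\T^m/\T^m=(\mathbb{C}P^\infty)^m=BT^m$. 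For any $G$-space $X$ the homotopy quotient $X\times_G EG=(X\times EG)/G$ fits into a fibre bundle
\[
X\longrightarrow X\times_G EG\longrightarrow BG
\]
with fibre $X$. Applying this with $X=\zk$ and $G=\T^m$, it then suffices to produce a homotopy equivalence $\zk\times_{\T^m}E\T^m\simeq(\mathbb{C}P^\infty)^\mathcal{K}$ under which the bundle projection to $B\T^m$ corresponds to the inclusion $(\mathbb{C}P^\infty)^\mathcal{K}\hookrightarrow(\mathbb{C}P^\infty)^m$.

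The second step is a coordinatewise computation. Writing $B_I=\prod_{i\in I}\Di\times\prod_{i\notin I}S^1$ and using that the homotopy quotient commutes with the union $\zk=\bigcup_{I\in\mathcal{K}}B_I$ (the delicate point, see below), we obtain
\[
\zk\times_{\T^m}E\T^m=\bigcup_{I\in\mathcal{K}}\Bigl(\prod_{i\in I}(\Di\times_{S^1}ES^1)\times\prod_{i\notin I}(S^1\times_{S^1}ES^1)\Bigr).
\]
For $i\in I$ the disc $\Di$ retracts $S^1$-equivariantly onto its centre, a fixed point, so $\Di\times_{S^1}ES^1\simeq ES^1/S^1=\mathbb{C}P^\infty$, with the bundle projection to $ES^1/S^1$ becoming the identity; for $i\notin I$ the free action gives $S^1\times_{S^1}ES^1\cong ES^1\simeq*$, with null-homotopic projection to $ES^1/S^1$. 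Choosing these equivalences consistently in $I$, they assemble into compatible homotopy equivalences $B_I\times_{\T^m}E\T^m\simeq BT^I$ lying over $BT^m$, and the union over $\mathcal{K}$ gives $\zk\times_{\T^m}E\T^m\simeq\bigcup_{I\in\mathcal{K}}BT^I=(\mathbb{C}P^\infty)^\mathcal{K}$, with the projection turning into the standard inclusion. Combined with the fibration of the first step, this identifies $\zk$ with the homotopy fibre of $(\mathbb{C}P^\infty)^\mathcal{K}\to(\mathbb{C}P^\infty)^m$.

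The main obstacle is precisely the pair of informal claims just made: that passing to the homotopy quotient commutes with the polyhedral union, and that homotopy equivalences $B_I\times_{\T^m}E\T^m\simeq BT^I$ which are compatible only up to homotopy over the poset $\mathcal{K}$ nevertheless glue to an equivalence of the two unions. Neither is automatic, since $(\,\cdot\,)/\T^m$ does not preserve arbitrary colimits and homotopy-compatible equivalences need not assemble. I would handle this by thickening the closed cover $\{B_I\times E\T^m\}_{I\in\mathcal{K}}$ to an open cover indexed by the same poset, so that both sides are realized as homotopy colimits of cofibrant $\mathcal{K}$-diagrams, and then using that $(\,\cdot\,)\times_{\T^m}E\T^m$ preserves homotopy colimits, so that a levelwise equivalence of diagrams induces an equivalence on colimits; alternatively one can make the equivalences $\Di\times_{S^1}ES^1\simeq\mathbb{C}P^\infty$ strictly natural in $I$ by using the model $\mathbb{C}P^\infty=(S^\infty\times\Di)/S^1$. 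The remaining points — that the $\T^m$-action on $\zk$ has a fixed point, so that the fibration even admits a section, and that all identifications are basepoint-compatible — are routine and I would leave them to the reader.
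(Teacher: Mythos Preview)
The paper does not prove this theorem at all: it is stated with a citation to \cite{bupa} and used as a black box, so there is no ``paper's own proof'' to compare against. Your argument is precisely the standard one given in the cited reference: identify the Borel construction $\zk\times_{\T^m}E\T^m$ with $(\mathbb{C}P^\infty)^\mathcal{K}$ coordinatewise, and then read off the homotopy fibre from the Borel fibration $\zk\to\zk\times_{\T^m}E\T^m\to B\T^m$.

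Your proof is correct, and you have correctly located the only genuine subtlety: assembling the levelwise equivalences $B_I\times_{\T^m}E\T^m\simeq BT^I$ into a global one. In \cite{bupa} this is handled not by a homotopy-colimit argument but by the strict route you mention at the end: both $\zk\times_{\T^m}E\T^m$ and $(\mathbb{C}P^\infty)^\mathcal{K}$ are polyhedral products---the first of the pair $(\Di\times_{S^1}ES^1,\;S^1\times_{S^1}ES^1)=((\Di\times S^\infty)/S^1,\;S^\infty)$, the second of $(\mathbb{C}P^\infty,*)$---and the obvious map of pairs is a strict, natural homotopy equivalence of pairs, so functoriality of the polyhedral product gives the equivalence on the nose. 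This avoids any thickening or cofibrancy discussion.
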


The homotopy fibration $\mathcal Z_\mathcal{K} \rightarrow (\mathbb{C}P^\infty)^\mathcal{K} \rightarrow (\mathbb{C}P^\infty)^m$ gives rise to the exact sequence of Pontryagin algebras:
\[
1 \rightarrow H_* (\varOmega \mathcal{Z}_\mathcal{K}) \rightarrow H_* \bigl( \varOmega (\mathbb{C}P^\infty)^\mathcal{K}\bigl) \rightarrow \Lambda[u_1, \dots, u_m] \rightarrow 0,
\]
and $ \Lambda [u_1, \dots, u_m] = H _ * (\varOmega (\mathbb{C} P ^ \infty)^m) = H_* (T^m) $. Therefore, $H_*(\varOmega \mathcal{Z}_\mathcal{K})$ is the commutator subalgebra of the non-commutative algebra $H_* \bigl( \varOmega (\mathbb{C}P^\infty)^\mathcal{K}\bigl)$. This algebra can be described explicitly in the case when $ \mathcal K $ be a flag complex:

\begin{theorem}[\cite{bupa}]\label{gptw12}
Let $\mathcal K$ be a flag complex. Then
\[
H_*(\varOmega (\mathbb{C}P^\infty)^\mathcal{K}; k) \cong T \langle \mu_1, \dots, \mu_m \rangle / (\mu_i^2 = 0, \mu_i \mu_j + \mu_j \mu_i = 0 \text{ if } \{i, j\} \in \mathcal K),
\]
where $T \langle \mu_1, \dots, \mu_m \rangle$ is a free (tensor) algebra, $\deg \mu_i = 1$.
\end{theorem}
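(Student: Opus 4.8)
The plan is to identify the Pontryagin algebra with the quadratic dual of the face ring and then to compute that dual by hand. First I would observe that $(\mathbb{C}P^\infty)^{\mathcal K}$ is the Davis--Januszkiewicz space $\djs(\mathcal K)$, so by Theorem~\ref{hzk} its cohomology ring is $k[\mathcal K]$; since $\mathcal K$ is flag, every minimal non-face has two elements, hence the defining ideal of $k[\mathcal K]$ is generated by the quadratic monomials $v_iv_j$ with $\{i,j\}\notin\mathcal K$. Thus $k[\mathcal K]$ is a \emph{quadratic} algebra $T(V)/(R_{\mathcal K})$, where $V=\langle v_1,\dots,v_m\rangle$ and $R_{\mathcal K}\subset V\otimes V$ is spanned by the elements $v_iv_j-v_jv_i$ for $\{i,j\}\in\mathcal K$ together with $v_iv_j$ and $v_jv_i$ for $\{i,j\}\notin\mathcal K$, $i\neq j$. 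By a theorem of Fr\"{o}berg, $k[\mathcal K]$ is a Koszul algebra exactly because $\mathcal K$ is flag (the Stanley--Reisner ideal of a flag complex has a linear resolution, over any commutative ring $k$), and therefore $\Ext_{k[\mathcal K]}(k,k)$ is the quadratic dual algebra $k[\mathcal K]^{!}=T(V^{*})/(R_{\mathcal K}^{\perp})$, with $\Ext^{s,t}_{k[\mathcal K]}(k,k)$ vanishing unless $t=2s$.

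Second, I would use formality of $\djs(\mathcal K)$: it is a polyhedral product of the formal spaces $\mathbb{C}P^\infty$ (equivalently, a homotopy colimit over the face poset of $\mathcal K$ of products of $\mathbb{C}P^\infty$'s), hence formal, so $C^{*}(\djs(\mathcal K))$ is equivalent to $k[\mathcal K]$ as an $A_\infty$-algebra. Formality together with Koszulness forces the Eilenberg--Moore (cobar) spectral sequence converging to $H_{*}(\varOmega\djs(\mathcal K))$ to collapse, and to do so compatibly with the Yoneda product on the $\Ext$ side and the Pontryagin product on the homology side. This yields an isomorphism of graded algebras $H_{*}(\varOmega\djs(\mathcal K))\cong k[\mathcal K]^{!}$, in the loop-space grading for which each generator has degree $1$.

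Third, it remains to compute $R_{\mathcal K}^{\perp}$. Let $u_1,\dots,u_m$ be the basis of $V^{*}$ dual to $v_1,\dots,v_m$, and pair $V\otimes V$ with $V^{*}\otimes V^{*}$ by $\langle v_i\otimes v_j,\,u_k\otimes u_l\rangle=\delta_{ik}\delta_{jl}$. An element $\sum_{k,l}c_{kl}\,u_k\otimes u_l$ lies in $R_{\mathcal K}^{\perp}$ iff $c_{ij}=c_{ji}$ whenever $\{i,j\}\in\mathcal K$ and $c_{ij}=c_{ji}=0$ whenever $\{i,j\}\notin\mathcal K$, $i\neq j$; so $R_{\mathcal K}^{\perp}$ has basis $\{u_i\otimes u_i:i\in[m]\}\cup\{u_i\otimes u_j+u_j\otimes u_i:\{i,j\}\in\mathcal K\}$. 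Hence $k[\mathcal K]^{!}\cong T\langle u_1,\dots,u_m\rangle/(u_i^{2}=0,\ u_iu_j+u_ju_i=0\text{ if }\{i,j\}\in\mathcal K)$, which is the asserted algebra after renaming $u_i$ to $\mu_i$.

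The hard part is not this linear algebra but justifying the isomorphism $H_{*}(\varOmega\djs(\mathcal K))\cong\Ext_{k[\mathcal K]}(k,k)$ over an arbitrary commutative ring $k$: formality and the collapse of the Eilenberg--Moore spectral sequence are cleanest over a field, and over $\mathbb Z$ one must keep track of flatness of the relevant $\Tor$ and $\Ext$ groups and of integral cochain-level structure. A way to sidestep this is to build an explicit Adams--Hilton type free differential graded algebra model for $C_{*}(\varOmega\djs(\mathcal K))$ and to show directly, using acyclicity of the Koszul complex of the flag face ring over $\mathbb Z$, that its homology is the quadratic dual; alternatively one can induct on the number of vertices, cutting $\mathcal K$ along a minimal non-face and tracking the effect on the loop homology.
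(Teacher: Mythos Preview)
The paper does not supply its own proof of this theorem: it is quoted verbatim from~\cite{bupa} as background, so there is no argument in the text to compare against. Your outline is essentially the proof given in the cited reference (Koszulness of the flag face ring via Fr\"oberg, formality of the Davis--Januszkiewicz space, identification of the Pontryagin algebra with the quadratic dual $\k[\mathcal K]^{!}$), and your computation of $R_{\mathcal K}^{\perp}$ is correct.

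One remark: you are right that the passage from ``Koszul plus formal'' to a multiplicative isomorphism $H_*(\varOmega\djs(\mathcal K))\cong\Ext_{\k[\mathcal K]}(\k,\k)$ needs care over a general ring. In~\cite{bupa} this is handled not by chasing spectral sequences over $\Z$ but by exhibiting an explicit cobar/Adams--Hilton model whose homology is computed directly from the Koszul resolution of $\k[\mathcal K]$; this is precisely the first workaround you sketch in your final paragraph, and it avoids the flatness issues entirely. Your inductive suggestion (peeling off a missing edge) also works and corresponds to the graph-product decomposition used in~\cite{g-p-t-w12}. Either route completes the argument; the only thing you have not done is actually carry one of them out, so as written this is a correct plan rather than a finished proof.
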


Here $\mu_i \in H_1(\varOmega( (\mathbb{C}P^\infty)^\mathcal{K}) )$ is the canonical generator corresponding to the $i$-th coordinate mapping $S^1 = \varOmega( \mathbb{C}P^\infty)^m \to \varOmega( \mathbb{C}P^\infty)^\mathcal{K}$.

The following result describes multiplicative generators of the commutator algebra $H_* (\varOmega \mathcal{Z}_\mathcal{K})$.

\begin{theorem}[\cite{g-p-t-w12}]\label{TP}
Let $\mathcal{K}$ be a flag complex. Then the subalgebra $H_* (\varOmega \mathcal{Z}_\mathcal{K}) \subset H_* \bigl( \varOmega (\mathbb{C}P^\infty)^\mathcal{K} \bigl)$ is multiplicatively generated by iterated commutators of the form:
\[
[\mu_i, \mu_j], \quad[\mu_{k_1}, [\mu_j, \mu_i]], \quad \dots,\quad [\mu_{k_1}, [\mu_{k_2},[\mu_{k_3}, \dots, [\mu_{k_{m-2}}, [\mu_j, \mu_i]]\dots]]],
\]
where $k_1 < k_2 < \dots < k_p < j > i$, $k_s \neq i$ for any $s$, and $i$ is the smallest vertex in a connected component not containing $j$ of the full subcomplex $\mathcal{K}_{\{k_1, \dots, k_p, j, i\}} \subset \mathcal{K}$. Furthemore, this multiplicative generating set is minimal, that is, the commutators above form a basis in the submodule of indecomposables in $H_*(\varOmega \zk)$.
\end{theorem}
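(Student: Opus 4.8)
The plan is to recast the statement algebraically. By Theorem~\ref{gptw12} we work inside $R_{\mathcal K}:=H_*(\varOmega(\mathbb{C}P^\infty)^{\mathcal K})=T\langle\mu_1,\dots,\mu_m\rangle/(\mu_i^2=0,\ \mu_i\mu_j+\mu_j\mu_i=0\text{ for }\{i,j\}\in\mathcal K)$, and, as recalled just before the theorem, $H_*(\varOmega\mathcal Z_{\mathcal K})$ is the commutator subalgebra $\mathcal C\subseteq R_{\mathcal K}$, i.e.\ the subalgebra generated by all iterated graded commutators of the generators $\mu_1,\dots,\mu_m$; equivalently, over $\mathbb Q$, if $R_{\mathcal K}=U(L_{\mathcal K})$ then $\mathcal C=U\bigl([L_{\mathcal K},L_{\mathcal K}]\bigr)$. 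So the theorem says exactly that the displayed commutators form a $k$-basis of the module of indecomposables $Q(\mathcal C)=\mathcal C_{+}/\mathcal C_{+}^{2}$. The technical device used throughout is a monomial normal form in $R_{\mathcal K}$: given a word $\mu_{i_1}\cdots\mu_{i_n}$, kill it when some letter has a ``hidden repeat'' (can be slid next to an equal letter through the edge-anticommutations, since $\mu_i^2=0$), and otherwise sort each adjacent pair $\mu_a\mu_b$ with $a>b$ and $\{a,b\}\in\mathcal K$ at the cost of a sign; the resulting reduced words form a $\mathbb Z^{m}$-multigraded $k$-basis of $R_{\mathcal K}$.

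I would first dispose of the easy inputs. Every listed commutator $w$ lies in $\mathcal C$ by construction. The hypotheses $k_1<\dots<k_p<j>i$, $k_s\neq i$, together with ``$i$ is the smallest vertex of a connected component of $\mathcal K_{\{k_1,\dots,k_p,j,i\}}$ not containing $j$'', are precisely what makes $w\neq0$ in $R_{\mathcal K}$ and what selects one representative per class: expanding $w$ in the normal-form basis, the reduction of the monomial $\mu_{k_1}\cdots\mu_{k_p}\mu_j\mu_i$ occurs with coefficient $\pm1$, distinct admissible $w$'s have distinct such leading monomials (so the list is linearly independent modulo decomposables), and the constraints kill exactly the brackets that vanish in $R_{\mathcal K}$ (when the index set is too connected, e.g.\ a clique) or that differ from an admissible one by a product of shorter commutators.

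It remains to prove the list generates $\mathcal C$. I would use a straightening induction on bracket length: via the graded Jacobi identity and the relations $\mu_i^2=0$ and $\mu_i\mu_j+\mu_j\mu_i=0$ (for $\{i,j\}\in\mathcal K$), an arbitrary iterated commutator of the $\mu_i$ is rewritten as a $k$-combination of products of admissible commutators of the displayed shape, the longer admissible commutators absorbing correction terms of strictly smaller length so the induction closes. Minimality can be obtained in tandem by a Hilbert-series count: collapse of the homology spectral sequence of the looped fibration $\mathcal Z_{\mathcal K}\to(\mathbb{C}P^\infty)^{\mathcal K}\to(\mathbb{C}P^\infty)^m$ gives an additive isomorphism $R_{\mathcal K}\cong\mathcal C\otimes\Lambda[u_1,\dots,u_m]$ (see~\cite{bupa}), hence the multigraded Hilbert series of $\mathcal C$, and then --- over $\mathbb Q$, using $Q(\mathcal C)=[L_{\mathcal K},L_{\mathcal K}]$ by Poincar\'e--Birkhoff--Witt, or equivalently via Theorem~\ref{hzk} and the dimensions $\dim\widetilde H^{*}(\mathcal K_I)$ --- the multigraded dimension of $Q(\mathcal C)$ in every multidegree, which one matches against the number of listed commutators.

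The main obstacle is exactly this combinatorial matching: proving that the iterated brackets of the prescribed form, with the inequality constraints and the ``smallest vertex of a component not containing $j$'' constraint, constitute a basis of $[L_{\mathcal K},L_{\mathcal K}]$ in every $\mathbb Z^{m}$-multidegree --- a Hall-basis-type theorem relative to the graph underlying $\mathcal K$. Both halves are delicate: that the list is large enough (termination of the straightening, with careful control of the correction terms), and that it is not too large (reconciling the explicit count with the Hilbert series of $R_{\mathcal K}$, which is itself a graph-product computation). A secondary point is that the statement is over an arbitrary ring $k$, whereas the Lie-algebra bookkeeping needs $\mathbb Q$; since the commutators, the normal form of $R_{\mathcal K}$ and the straightening relations are all defined over $\mathbb Z$, the argument can be run integrally, which is the route I would ultimately take.
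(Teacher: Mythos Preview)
The paper does not prove this theorem at all: it is stated with the citation~\cite{g-p-t-w12} and used as a black box, so there is no ``paper's own proof'' to compare against. Your proposal is therefore an attempt to supply a proof where the author simply imports the result from Grbi\'c--Panov--Theriault--Wu.

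As for the content of your sketch: the overall strategy (normal-form basis for $R_{\mathcal K}$, straightening via Jacobi, minimality via a Hilbert-series/Tor count matched against $\dim\widetilde H^*(\mathcal K_I)$) is indeed the shape of the argument in~\cite{g-p-t-w12}, and your identification of the commutator subalgebra with $H_*(\varOmega\mathcal Z_{\mathcal K})$ is correct. However, you yourself flag the genuine difficulty and do not resolve it: the ``straightening induction'' is asserted but not carried out, and this is exactly where the work lies. It is not automatic that Jacobi rewriting terminates in admissible brackets of the stated form; the constraint ``$i$ is the smallest vertex in a connected component not containing $j$'' is not a standard Hall or Lyndon condition, and one must exhibit a well-founded order compatible with it. Likewise, the claim that distinct admissible brackets have distinct leading normal-form monomials needs an argument (the inner bracket $[\mu_j,\mu_i]$ expands to two terms, and after reduction the ``leading'' monomial is not simply $\mu_{k_1}\cdots\mu_{k_p}\mu_j\mu_i$). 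So the proposal is a correct outline of the known proof, but the parts you label ``the main obstacle'' are precisely the substance of the theorem, and remain to be done.
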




\section{The case of pentagon}

In this section, $ \mathcal K $ is the boundary of pentagon. That is, $\mathcal{K}$ is the simplicial complex on $[5] = \{1, 2, 3, 4, 5\}$ with maximal simplices $\{1,2\}, \{2,3\}, \{3,4\}, \{4,5\}, \{5,1\}$. We describe the cohomology, homology and Pontryagin algebra of $\zk$.

\subsection*{Cohomology}


The cohomology ring structure of $\zk$ can be described using Theorem~\ref{hzk}. The nontrivial cohomology groups are
\begin{align*}
H^0 (\zk) & \cong \widetilde{H}^{-1} (\mathcal{K}_\varnothing) \cong k,\\
H^3 (\zk) & \cong \bigoplus_{|I|=2} \widetilde{H}^0 (\mathcal{K}_I) \cong k^5,\\
H^4 (\zk) & \cong \bigoplus_{|I|=3} \widetilde{H}^0 (\mathcal{K}_I) \cong k^5,\\
H^7 (\zk) & \cong \widetilde{H}^1 (\mathcal{K}) \cong k.
\end{align*}

To describe the cohomology ring it is convenient to work with Koszul algebra $\varLambda [u_1, \ldots, u_5] \otimes k[\mathcal{K}]$. A generator of $H^7(\zk)$ is represented by any monomial $u_i u_{i+1} u_{i+2} v_{i+3} v_{i+4} \in \varLambda [u_1, \ldots, u_5] \otimes k[\mathcal{K}]$, where the indices are considered modulo~$5$. All these monomials represent the same cohomology classes; this follows from the identities
\begin{multline*}
[d(u_1 u_2 u_3 u_4 v_5)] = [d(u_1 u_2 u_3 u_5 v_4)] = [d(u_1 u_2 u_4 u_5 v_3)] =\\
= [d(u_1 u_3 u_4 u_5 v_2)] = [d(u_2 u_3 u_4 u_5 v_1)] = 0,
\end{multline*}
that is,
\begin{multline}\label{equ}
[u_2 u_3 u_4 v_5 v_1] - [u_1 u_2 u_3 v_4 v_5] = [u_1 u_2 u_5 v_3 v_4] - [u_1 u_2 u_3 v_4 v_5] =\\
= - [u_1 u_4 u_5 v_2 v_3] + [u_1 u_2 u_5 v_3 v_4] = [u_3 u_4 u_5 v_1 v_2] - [u_1 u_4 u_5 v_2 v_3] =\\
= [u_3 u_4 u_5 v_1 v_2] - [u_2 u_3 u_4 v_1 v_5] = 0.
\end{multline}
We denote $ t = [u_1 u_2 u_3 v_4 v_5] \in H^7 (\zk)$ and calculate the product in the cohomology ring.
We choose bases for $H^3(\zk)$ and $H^4(\zk)$ as shown in Table~\ref{zk5prod}. For any given basis element of $H^3(\zk)$ there is a unique basis element of $H^4(\zk)$ such that the product of these two elements is $t$. The product of any other two elements of $H^3(\zk)$ and $H^4(\zk)$ is zero. For example,
\[
[u_1 v_3] \cdot [u_4 u_5 v_2] = [u_1 u_4 u_5 v_2 v_3] = [u_1 u_2 u_3 v_4 v_5] = t,
\]
where the second identity follows from~\eqref{equ}.

From these equations we obtain a table of products in cohomology
\begin{table}[h]
\caption{Cohomology classes and their products.}\label{zk5prod}
\begin{center}
\begin{tabular}{c|c|c}
$H^3(\zk)$ & $H^4(\zk)$ & Product \\
\hline
$[u_1 v_3]$ & $[u_4 u_5 v_2]$ & $t$ \\
\hline
$[u_2 v_4]$ & $-[u_1 u_5 v_3]$ & $t$ \\
\hline
$[u_3 v_5]$ & $[u_1 u_2 v_4]$ & $t$ \\
\hline
$[u_4 v_1]$ & $[u_2 u_3 v_5]$ & $t$ \\
\hline
$[u_5 v_2]$ & $[u_3 u_4 v_1]$ & $t$ \\
\end{tabular}
\end{center}
\end{table}
\subsection*{Homology}
To describe the homology of $\zk$ we use the cell decomposition from \cite[\S4.4]{bupa}. Each factor $\mathbb{D}$ in $\mathbb{D}^m$ is decomposed into three cells: the point $1 \in \mathbb{D}$ is the $0$-cell; the complement to $1$ in the boundary circle is the $1$-cell, which we denote by $S$; and the interior of $\mathbb{D}$ is the $2$-cell, which we denote by $D$. {\samepage{By taking a product we obtain a cellular decomposition of $\mathbb{D}^m$ whose cells have the form
\[
\prod_{i \in I} D_i \times \prod_{j \in J} S_j,
\]
where $I, J \subset [m]$ and $I \cap J = \varnothing$.}} The cells of $\zk \subset \mathbb{D}^m$ are specified by the condition $I \in \mathcal{K}$.

We now describe cellular cycles dual to the cohomology classes in Table~\ref{zk5prod}. These are given in Table~\ref{zk5hombas}.
For example, the cochain $ u_1 v_3 $ takes value $1$ on the chain $ S_1 D_3 + D_1 S_3 $ and vanishes on all other chains given.
\begin{table}[h]
\caption{Cellular cycles representing basis homology classes.}\label{zk5hombas}
\begin{center}
\begin{tabular}{c|c}
$H_3(\zk)$ & $H_4(\zk)$\\
\hline
$S_1 D_3 + D_1 S_3$ & $ D_2 S_4 S_5 - S_2 S_4 D_5$\\
\hline
$S_2 D_4 + D_2 S_4$ & $  - S_1 S_3 D_5 - S_1 D_3 S_5 $\\
\hline
$S_3 D_5 + D_3 S_5$ & $ - D_1 S_2 S_4 + S_1 S_2 D_4$ \\
\hline
$S_4 D_1 + D_4 S_1$ & $ - D_2 S_3 S_5  + S_2 S_3 D_5$ \\
\hline
$S_5 D_2 + D_5 S_2$ & $ D_1 S_3 S_4 - S_1 S_3 D_4$ \\
\end{tabular}
\end{center}
\end{table}

\subsection*{Pontryagin algebra}
The canonical element $\mu_i \in H_1 (\varOmega (\mathbb{C}P^\infty)^\mathcal{K})$ is the Hurewicz image of the homotopy class of the map $S^1 \rightarrow \varOmega(\mathbb{C}P^\infty)^\mathcal{K}$ adjoint to the $i$-th coordinate inclusion $S^2 \hookrightarrow (\mathbb{C}P^\infty)^\mathcal{K}$. We denote the latter map by $\widehat{\mu}_i$ and denote its adjoint by $\mu_i \colon S^1 \rightarrow \varOmega(\mathbb{C}P^\infty)^\mathcal{K}$ (therefore using the same notation for the map and its Hurewicz image, but this will cause no confusion).
The commutator $[\mu_i, \mu_j] = \mu_i \mu_j + \mu_j \mu_i$ in the Pontryagin algebra $H_* (\varOmega (\mathbb{C}P^\infty)^\mathcal{K})$ is adjoint to the Whitehead product $[\widehat{\mu}_i, \widehat{\mu}_j] \colon S^3 \rightarrow (\mathbb{C}P^\infty)^\mathcal{K}$. It lifts to $\zk$ via the homotopy fibration $\mathcal Z_\mathcal{K} \rightarrow (\mathbb{C}P^\infty)^\mathcal{K} \rightarrow (\mathbb{C}P^\infty)^m$, and therefore we can view $[\mu_i, \mu_j]$ as an element of $H_*(\varOmega \zk)$. We consider iterated Whitehead products $[\widehat{\mu}_{i_1}, [\widehat{\mu}_{i_2}, \cdots [\widehat{\mu}_{i_{k-1}}, \widehat{\mu}_{i_k}]\cdots]] \colon S^{k+1} \to \zk \rightarrow (\mathbb{C}P^\infty)^\mathcal{K}, k \geqslant 2$, and iterated commutators $[\mu_{i_1}, [\mu_{i_2}, \cdots [\mu_{i_{k-1}}, \mu_{i_k}]\cdots]] \in H_k(\varOmega \zk) \subset H_k(\varOmega (\mathbb{C}P^\infty)^\mathcal{K})$ similarly.

\emph{Warning}: when we view $[\widehat{\mu}_{i_1}, [\widehat{\mu}_{i_2}, \cdots [\widehat{\mu}_{i_{k-1}}, \widehat{\mu}_{i_k}]\cdots]]$ as an element in $\pi_{k+1}(\zk)$, it is \emph{not} a Whitehead product, because there are no elements $\widehat{\mu}_i$ in $\pi_2(\zk)$. These elements belong to $\pi_2((\mathbb{C}P^\infty)^\mathcal{K})$; it is their Whitehead products that lift to $\pi_{k+1}(\zk)$.

Let $h \colon \pi_k (\zk) \to H_k(\zk)$ denote the Hurewicz homomorphism. From the description above it easy to deduce the following relation between iterated commutators and cellular chains:

\begin{lemma}\label{lemcomhu}
The Hurewicz image $h([ \widehat{\mu}_{i_1}, [ \widehat{\mu}_{i_2}, \cdots [ \widehat{\mu}_{i_{k-1}}, \widehat{\mu}_{i_{k}} ] \cdots ] ]) \in H_{k+1} (\zk)$ is represented by the cellular chain $S_{i_1} \cdots D_{i_{k-1}} S_{i_k} + S_{i_1} \cdots S_{i_{k-1}} D_{i_k}$.
\end{lemma}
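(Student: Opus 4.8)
The plan is to prove the formula by induction on the number $k$ of generators in the bracket, reducing each step to a computation on cellular chains, using the heuristic that after looping and collapsing the boundary circles $\mathbb{D}\to\mathbb{D}/S^1$ the class $\mu_i$ is reflected on cellular chains by the $1$-cell $S_i$. First I record the reduction to a homology suspension. Write $W=[\widehat\mu_{i_1},[\widehat\mu_{i_2},\cdots[\widehat\mu_{i_{k-1}},\widehat\mu_{i_k}]\cdots]]\colon S^{k+1}\to(\mathbb{C}P^\infty)^{\mathcal K}$ and let $\widetilde W\colon S^{k+1}\to\zk$ be its lift; the lift is well defined up to homotopy because lifts correspond to null-homotopies of the composite $S^{k+1}\to(\mathbb{C}P^\infty)^m$ (an iterated Whitehead product in a product of $K(\mathbb Z,2)$'s, hence null), and these form a torsor over $\pi_{k+2}\bigl((\mathbb{C}P^\infty)^m\bigr)=0$. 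The adjoint $\widetilde g\colon S^k\to\varOmega\zk$ of $\widetilde W$ lifts the corresponding iterated Samelson product in $\varOmega(\mathbb{C}P^\infty)^{\mathcal K}$, and $\widetilde W$ equals the composite $S^{k+1}=\Sigma S^k\to\Sigma\varOmega\zk\to\zk$ of $\Sigma\widetilde g$ with the evaluation map; hence $h(\widetilde W)$ is the homology suspension $\sigma\bigl(h(\widetilde g)\bigr)$, and $h(\widetilde g)$ is the iterated Pontryagin commutator $[\mu_{i_1},[\mu_{i_2},\cdots[\mu_{i_{k-1}},\mu_{i_k}]\cdots]]\in H_k(\varOmega\zk)$ by the standard identity relating Samelson products to graded commutators in Pontryagin rings. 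So it remains to identify $\sigma\bigl([\mu_{i_1},[\mu_{i_2},\cdots]]\bigr)$ with the cellular cycle in the statement.

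For the base case $k=2$ I identify the lift directly. The fibre projection $q\colon\zk\to(\mathbb{C}P^\infty)^{\mathcal K}$ is induced coordinatewise by $\mathbb{D}\to\mathbb{D}/S^1=\mathbb{C}P^1\hookrightarrow\mathbb{C}P^\infty$, so the restriction of $q$ to the coordinate subspace $\partial(\mathbb{D}_i\times\mathbb{D}_j)\times\{1\}\subset\zk$ equals, after the identifications $\mathbb{D}_i/S^1=S^2$ and $\mathbb{D}_j/S^1=S^2$, the universal Whitehead product $S^3\to S^2\vee S^2$ followed by $\widehat\mu_i\vee\widehat\mu_j$; that is, the inclusion of the $3$-sphere $\partial(\mathbb{D}_i\times\mathbb{D}_j)$ into $\zk$ is a lift of $[\widehat\mu_i,\widehat\mu_j]$. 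Consequently $h\bigl([\widehat\mu_i,\widehat\mu_j]\bigr)$ is the image of the fundamental class of this $3$-sphere, namely the cellular cycle $S_iD_j+D_iS_j$ for a suitable orientation; when $\{i,j\}\in\mathcal K$ this sphere bounds the $4$-cell $\mathbb{D}_i\times\mathbb{D}_j\subset\zk$, and indeed $S_iD_j+D_iS_j=\partial(\pm D_iD_j)$ is a boundary, so the formula remains correct. (Alternatively one reduces to $\mathcal Z_{\partial\Delta^1}=S^3$ via the simplicial map $\partial\Delta^1\to\mathcal K$, $1\mapsto i$, $2\mapsto j$.)

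For the inductive step, suppose the formula holds for the lift $\widetilde W'$ of $W'=[\widehat\mu_{i_2},\cdots[\widehat\mu_{i_{k-1}},\widehat\mu_{i_k}]\cdots]$, so that $\sigma(\gamma')=[c']$ with $\gamma'=[\mu_{i_2},[\cdots]]$ and $c'=S_{i_2}\cdots D_{i_{k-1}}S_{i_k}+S_{i_2}\cdots S_{i_{k-1}}D_{i_k}$, and compute $\sigma\bigl([\mu_{i_1},\gamma']\bigr)$ in the Adams cobar construction computing $H_*(\varOmega\zk)$ from the cellular chains $C_*(\zk)$ of Theorem~\ref{hzk}, where the homology suspension is the projection onto the cobar-length-one summand. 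Here $\mu_{i_1}$ is carried, via $q$, by the chain $s^{-1}D_{i_1}$, whose cobar differential has the form $-\,s^{-1}S_{i_1}+\cdots$ and so leaks the $1$-cell $S_{i_1}$; bracketing the cobar cycle representing $\gamma'$ with $s^{-1}D_{i_1}$ and adding the lower-cobar-length corrections needed to obtain a genuine cycle, one gets a cobar cycle for $[\mu_{i_1},\gamma']$ whose length-one part is $s^{-1}(S_{i_1}\cdot c')$. Therefore $h(\widetilde W)=\sigma\bigl([\mu_{i_1},\gamma']\bigr)=[S_{i_1}\cdot c']=[\,S_{i_1}\cdots D_{i_{k-1}}S_{i_k}+S_{i_1}\cdots S_{i_{k-1}}D_{i_k}\,]$, which closes the induction. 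A convenient check on orientations is the graded Jacobi identity for iterated Whitehead products: the cyclic sum over the three nestings of $\widehat\mu_1,\widehat\mu_2,\widehat\mu_3$ vanishes, and correspondingly $S_1(S_2D_3+D_2S_3)+S_2(S_3D_1+D_3S_1)+S_3(S_1D_2+D_1S_2)=0$ after reordering with Koszul signs.

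The main obstacle is the inductive step: making precise the compatibility between the cobar description of $H_*(\varOmega\zk)$ as the commutator subalgebra of $H_*\bigl(\varOmega(\mathbb{C}P^\infty)^{\mathcal K}\bigr)$ (Theorems~\ref{gptw12} and~\ref{TP}) and the cellular chain complex of $\zk$, and verifying with the correct signs that the homology suspension of the \emph{ambient} commutator $[\mu_{i_1},\gamma']$ is left multiplication by $S_{i_1}$ on the representing cycle. Note that $[\mu_{i_1},\gamma']$ is typically \emph{indecomposable} in $H_*(\varOmega\zk)$, so the vanishing of the homology suspension on decomposables gives no information here — it is precisely the non-decomposable part that must be computed. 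A more hands-on alternative, staying inside the polydisc, is to build $\widetilde W$ from $\widetilde W'$ by interleaving the coordinate $\mathbb{D}_{i_1}$ with the coordinates carrying $\widetilde W'$; the only delicate point there is the homotopy reconciling the two partial lifts along their common boundary, after which one reads off which cells of $\zk$ are covered, and with what multiplicity, to recover $S_{i_1}\cdot c'$.
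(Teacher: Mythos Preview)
The paper does not actually prove this lemma: it is introduced with the phrase ``From the description above it is easy to deduce the following relation'' and no argument is supplied. So there is no proof in the paper to compare against, and I can only assess your attempt on its own terms.

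Your reduction to the homology suspension and your base case $k=2$ are both correct and are the natural arguments. The sphere $\partial(\mathbb{D}_i\times\mathbb{D}_j)\subset\zk$ has cellular fundamental cycle $D_iS_j+S_iD_j$, and the fibre projection $q$, which collapses each boundary circle, sends this sphere to $S^2_i\vee S^2_j$ by the universal Whitehead product; hence the inclusion is a lift of $[\widehat\mu_i,\widehat\mu_j]$, and its Hurewicz image is as stated.

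The inductive step, however, is an outline rather than a proof, and the gap you flag in your final paragraph is the whole content of the step. The element $s^{-1}D_{i_1}$ in the cobar construction of $C_*(\zk)$ is not a cycle and does not represent a class in $H_*(\varOmega\zk)$; the class $\mu_{i_1}$ lives only in $H_*\bigl(\varOmega(\mathbb{C}P^\infty)^{\mathcal K}\bigr)$. Your plan is to form $[s^{-1}D_{i_1},\Gamma']$, correct it to a genuine cobar cycle for $\zk$ by adding terms that die under $\varOmega q$, and then assert that the length-one part of the corrected cycle is $s^{-1}(S_{i_1}\cdot c')$. But that last assertion \emph{is} the lemma: one must actually produce the corrections and verify that they contribute nothing in cobar length one, and you have not done this. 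The same criticism applies to your ``hands-on alternative'': the point you call delicate---matching the map built from the $i_1$-th disc and $\widetilde W'$ with the chosen lift $\widetilde W$ of $[\widehat\mu_{i_1},W']$, and then reading off which cells it hits---is exactly the inductive step, not a preliminary to it. Both strategies are reasonable, but in each the load-bearing computation has been named rather than carried out, so as written the induction does not close.
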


\begin{table}[h]
\caption{Коммутаторы.}\label{comm5}
\begin{center}
\begin{tabular}{c|c}
$H_2 (\varOmega \zk)$ & $H_3 (\varOmega \zk)$\\
\hline
$[\mu_3, \mu_1]$ & $ [\mu_4, [\mu_5, \mu_2]] $\\
\hline
$[\mu_4, \mu_2]$ & $ - [\mu_1, [\mu_5, \mu_3]] $\\
\hline
$[\mu_5, \mu_3]$ & $ - [\mu_2, [\mu_4, \mu_1]] $\\
\hline
$[\mu_4, \mu_1]$ & $ - [\mu_3, [\mu_5, \mu_2]] $\\
\hline
$[\mu_5, \mu_2]$ & $ [\mu_3, [\mu_4, \mu_1]] $\\
\end{tabular}
\end{center}
\end{table}

Now we write the commutators corresponding to the basis cellular chains:
\begin{gather*}
S_i D_j + D_i S_j \leftrightarrow [\mu_i, \mu_j]\\
S_i S_j D_k + S_i D_j S_k \leftrightarrow [\mu_i, [\mu_j, \mu_k]].
\end{gather*}
These commutators are given in Table~\ref{comm5}.

{\samepage
\begin{theorem}\label{thm5} Let $\mathcal K$ be the boundary of pentagon.
\begin{itemize}
\item[(a)] There is a homotopy equivalence $\mathcal{Z}_\mathcal{K} \simeq (S^3 \times S^4)^{\#5}$, where $(S^3 \times S^4)^{\#5}$ is a connected sum of $5$ copies $S^3 \times S^4$.
\item[(b)]$H_* (\varOmega \mathcal{Z}_\mathcal{K})$ is an algebra with $ 10 $ generators:
\[
\alpha_1=[\mu_3,\mu_1], \alpha_2=[\mu_4,\mu_1], \alpha_3=[\mu_4,\mu_2], \alpha_4=[\mu_5,\mu_2], \alpha_5=[\mu_5, \mu_3],
\]
\[
\beta_1=[\mu_4,[\mu_5,\mu_2]], \beta_2=[\mu_3,[\mu_5,\mu_2]], \beta_3=[\mu_1,[\mu_5,\mu_3]], \beta_4=[\mu_3,[\mu_4,\mu_1]],
\]
\[
\beta_5=[\mu_2,[\mu_4,\mu_1]],
\]
that satisfy a single relation:
\begin{equation}\label{5sot}
- [\alpha_1,\beta_1] + [\alpha_2,\beta_2] + [\alpha_3,\beta_3] - [\alpha_4,\beta_4] + [\alpha_5,\beta_5] = 0,
\end{equation}
where $\deg \alpha_i=2, \deg \beta_i=3$, $[\alpha,\beta]=\alpha \beta-(-1)^{\deg \alpha \cdot \deg \beta}\beta \alpha$.
\end{itemize}
\end{theorem}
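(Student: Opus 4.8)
The plan is to prove part (a) by applying Theorem~\ref{TP} and Theorem~\ref{gptw12} to identify $H_*(\varOmega\zk)$ explicitly as a subalgebra of the free-ish algebra $T\langle\mu_1,\dots,\mu_5\rangle/(\mu_i^2=0,\ \mu_i\mu_j+\mu_j\mu_i=0\text{ if }\{i,j\}\in\mathcal K)$, and then to recognise this subalgebra as the loop homology of a connected sum $(S^3\times S^4)^{\#5}$. First I would invoke Theorem~\ref{TP}: since the pentagon boundary is flag and has no simplices of dimension $\ge 2$, the iterated commutators that survive are exactly the ten listed, namely the five degree-$2$ commutators $[\mu_j,\mu_i]$ with $\{i,j\}\notin\mathcal K$ (the five ``diagonals'') and five degree-$3$ triple commutators; the combinatorial bookkeeping (checking $i$ is the smallest vertex in the component of $\mathcal K_{\{k_1,\dots,k_p,j,i\}}$ not containing $j$) picks out precisely the $\alpha_i,\beta_i$ as in Table~\ref{comm5}. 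This gives the generating set in (b), and the ranks $5,5$ match $H_3(\zk),H_4(\zk)$ and $H_2(\varOmega\zk),H_3(\varOmega\zk)$ computed above.

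Next I would establish that there is exactly one relation and determine it. On the topological side, part (a) tells us $\zk\simeq(S^3\times S^4)^{\#5}$; for a connected sum $N=\#_{k}(S^3\times S^4)$ one knows (e.g.\ from the James splitting of $\varOmega\Sigma$ applied after the standard fact that such an $N$ minus a point is homotopy equivalent to a wedge $\bigvee_k(S^3\vee S^4)$, or directly from the Bott--Samelson theorem) that $H_*(\varOmega N)$ is a one-relator algebra whose single relation is the vanishing of the class dual to the top cell, expressed via the attaching map of that top cell. Concretely, the top cell of $N$ is attached by a sum of Whitehead products $\sum\pm[\iota_{3}^{(i)},\iota_{4}^{(i)}]$ over the $k$ summands, so the relation in $H_*(\varOmega N)$ is $\sum\pm[\alpha_i,\beta_i]=0$ where $\alpha_i,\beta_i$ are the loop-homology generators dual to the $S^3$ and $S^4$ factors. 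To pin down the signs and the pairing $\alpha_i\leftrightarrow\beta_i$, I would use Lemma~\ref{lemcomhu} together with the cohomology product Table~\ref{zk5prod} and the cycle Table~\ref{zk5hombas}: the Hurewicz images of $\alpha_i$ and $\beta_i$ are the cellular cycles listed, and the requirement that $\langle [u_1u_2u_3v_4v_5],\, h(\alpha_i)\cdot h(\beta_j)\rangle$ be the identity matrix (up to the signs recorded in Table~\ref{zk5prod}) forces the pairing and signs appearing in~\eqref{5sot}.

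Finally I would verify that~\eqref{5sot} actually holds in $H_*(\varOmega\zk)$ by a direct computation inside $H_*(\varOmega(\mathbb{C}P^\infty)^\mathcal K)$, i.e.\ expand each $[\alpha_i,\beta_i]$ as a $\mathbb Z$-linear combination of monomials $\mu_{a}\mu_{b}\mu_{c}\mu_{d}\mu_{e}$, repeatedly apply the relations $\mu_i^2=0$ and $\mu_i\mu_j=-\mu_j\mu_i$ for edges $\{i,j\}$ of the pentagon, and check the alternating sum collapses to $0$; then confirm that no shorter relation exists by comparing Poincar\'e series (the subalgebra generated by $\alpha_i,\beta_i$ modulo~\eqref{5sot} must have the Poincar\'e series of $H_*(\varOmega\zk)$, which is determined by part (a) and the loop-space homology of a connected sum of sphere products). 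I expect the main obstacle to be part (a) itself together with the sign/indexing consistency in~\eqref{5sot}: proving the homotopy equivalence $\zk\simeq(S^3\times S^4)^{\#5}$ cleanly in a way that makes the generator correspondence transparent requires either citing McGavran's diffeomorphism result and transporting the cell structure, or building the equivalence directly from the cohomology ring in Table~\ref{zk5prod} via a Poincar\'e-duality/surgery argument — and then every Whitehead-product sign has to be tracked through the adjunction $\pi_{k+1}(\zk)\to H_*(\varOmega\zk)$ without error, which is where a careless computation would go wrong.
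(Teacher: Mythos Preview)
Your proposal is correct in its mechanics and would succeed, but it runs the logic in the opposite direction from the paper, and that reversal costs you the main payoff. You plan to establish (a) first---either by citing McGavran or by a Poincar\'e-duality/surgery argument on the cohomology ring---and then use (a) to conclude that $H_*(\varOmega\zk)$ is a one-relator algebra, after which you identify the relation via the cohomology pairing and verify it by the monomial expansion. The paper instead verifies~\eqref{5sot} first by the same brute-force expansion you describe, and then uses the relation to \emph{construct} the homotopy equivalence: the wedge of the ten Whitehead-product maps gives $f\colon(S^3\vee S^4)^{\vee 5}\to\zk$, and the relation~\eqref{5sot} is precisely what is needed to extend $f$ over the top cell of the connected sum, producing $\widehat f\colon(S^3\times S^4)^{\#5}\to\zk$; a homology check then shows $\widehat f$ is an equivalence. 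Once (a) is established this way, the one-relator claim in (b) follows exactly as you say.

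What this buys the paper is a self-contained homotopy-theoretic proof of (a) that does not invoke McGavran---indeed, the introduction advertises this as a new proof of his result---whereas your route takes McGavran (or an ad hoc surgery argument) as input. Your concern about ``proving the homotopy equivalence cleanly in a way that makes the generator correspondence transparent'' is thus resolved in the paper by building the equivalence \emph{out of} the generators $\widehat\alpha_i,\widehat\beta_i$ themselves. The monomial verification of~\eqref{5sot} and the appeal to Theorem~\ref{TP} for the generating set are identical in both approaches.
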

}
Each summand $[\alpha_i, \beta_i]$ in \eqref{5sot} corresponds to a single row in Table~\ref{comm5}.
\begin{proof}[Proof of Theorem~\ref{thm5}]
We first derive the relation~\eqref{5sot}. The $ 10 $ generators $ \alpha_1, \ldots, \alpha_5$, $\beta_1, \ldots, \beta_5 $ are exactly those given by Theorem~\ref {TP} in the case when $ \mathcal K $ is the boundary of pentagon. Geometrically, the generator $ \alpha_1 \in H_2 (\varOmega \zk) $ is given by the map $S^2 \rightarrow \varOmega \zk$  which is adjoint to the lift $S^3 \rightarrow \zk$ of the Whitehead product $[\widehat{\mu}_3, \widehat{\mu}_1]$, as described in the diagram
\[
  \xymatrix{
  &S^3 \ar^{[\widehat{\mu}_3, \widehat{\mu}_1]}[d] \ar@{-->}[dl]\\
  \zk \ar[r] & (\mathbb{C}P^\infty)^\mathcal{K} \ar[r] & (\mathbb{C}P^\infty)^m
  }
\]
Similarly, the generator $ \beta_1 \in H_3 (\varOmega \zk) $ is given by the map $S^3 \rightarrow \varOmega \zk$ adjoint to the lift $S^4 \rightarrow \zk$ in the diagram
\[
  \xymatrix{
  &S^4 \ar^{[\widehat{\mu}_4, [\widehat{\mu}_5, \widehat{\mu}_2]]}[d] \ar@{-->}[dl]\\
  \zk \ar[r] & (\mathbb{C}P^\infty)^\mathcal{K} \ar[r] & (\mathbb{C}P^\infty)^m
  }
\]


We expand each commutator in the tensor algebra $T\langle \mu_1, \ldots, \mu_5 \rangle$ using \cite{Wolfram}:
\[
\begin{array}{ll}
[\alpha_1,\beta_1]=& -\mu_1 \mu_3 \mu_2 \mu_5 \mu_4 + \mu_1 \mu_3 \mu_4 \mu_2 \mu_5 + \mu_1 \mu_3 \mu_4 \mu_5 \mu_2 - \mu_1 \mu_3 \mu_5 \mu_2 \mu_4 - \\
& - \mu_3 \mu_1 \mu_2 \mu_5 \mu_4 + \mu_3 \mu_1 \mu_4 \mu_2 \mu_5 + \mu_3 \mu_1 \mu_4 \mu_5 \mu_2 - \mu_3 \mu_1 \mu_5 \mu_2 \mu_4 + \\
& + \mu_2 \mu_5 \mu_4 \mu_1 \mu_3 - \mu_4 \mu_2 \mu_5 \mu_1 \mu_3 - \mu_4 \mu_5 \mu_2 \mu_1 \mu_3 + \mu_5 \mu_2 \mu_4 \mu_1 \mu_3 + \\
& + \mu_2 \mu_5 \mu_4 \mu_3 \mu_1 - \mu_4 \mu_2 \mu_5 \mu_3 \mu_1 - \mu_4 \mu_5 \mu_2 \mu_3 \mu_1 + \mu_5 \mu_2 \mu_4 \mu_3 \mu_1,\\
\end{array}
\]
\[
\begin{array}{ll}
[\alpha_2,\beta_2]=&-\mu_1 \mu_4 \mu_2 \mu_5 \mu_3 + \mu_1 \mu_4 \mu_3 \mu_2 \mu_5 + \mu_1 \mu_4 \mu_3 \mu_5 \mu_2 - \mu_1 \mu_4 \mu_5 \mu_2 \mu_3 - \\
& - \mu_4 \mu_1 \mu_2 \mu_5 \mu_3 + \mu_4 \mu_1 \mu_3 \mu_2 \mu_5 + \mu_4 \mu_1 \mu_3 \mu_5 \mu_2 - \mu_4 \mu_1 \mu_5 \mu_2 \mu_3 + \\
& + \mu_2 \mu_5 \mu_3 \mu_1 \mu_4 - \mu_3 \mu_2 \mu_5 \mu_1 \mu_4 - \mu_3 \mu_5 \mu_2 \mu_1 \mu_4 + \mu_5 \mu_2 \mu_3 \mu_1 \mu_4 + \\
& + \mu_2 \mu_5 \mu_3 \mu_4 \mu_1 - \mu_3 \mu_2 \mu_5 \mu_4 \mu_1 - \mu_3 \mu_5 \mu_2 \mu_4 \mu_1 + \mu_5 \mu_2 \mu_3 \mu_4 \mu_1,
\end{array}
\]
\[
\begin{array}{ll}
[\alpha_3,\beta_3]=& + \mu_2 \mu_4 \mu_1 \mu_3 \mu_5 + \mu_2 \mu_4 \mu_1 \mu_5 \mu_3 - \mu_2 \mu_4 \mu_3 \mu_5 \mu_1 - \mu_2 \mu_4 \mu_5 \mu_3 \mu_1 + \\
& + \mu_4 \mu_2 \mu_1 \mu_3 \mu_5 + \mu_4 \mu_2 \mu_1 \mu_5 \mu_3 - \mu_4 \mu_2 \mu_3 \mu_5 \mu_1 - \mu_4 \mu_2 \mu_5 \mu_3 \mu_1 - \\
& - \mu_1 \mu_3 \mu_5 \mu_2 \mu_4 - \mu_1 \mu_5 \mu_3 \mu_2 \mu_4 + \mu_3 \mu_5 \mu_1 \mu_2 \mu_4 + \mu_5 \mu_3 \mu_1 \mu_2 \mu_4 - \\
& - \mu_1 \mu_3 \mu_5 \mu_4 \mu_2 - \mu_1 \mu_5 \mu_3 \mu_4 \mu_2 + \mu_3 \mu_5 \mu_1 \mu_4 \mu_2 + \mu_5 \mu_3 \mu_1 \mu_4 \mu_2,
\end{array}
\]
\[
\begin{array}{ll}
[\alpha_4,\beta_4]=&-\mu_2 \mu_5 \mu_1 \mu_4 \mu_3 + \mu_2 \mu_5 \mu_3 \mu_1 \mu_4 + \mu_2 \mu_5 \mu_3 \mu_4 \mu_1 - \mu_2 \mu_5 \mu_4 \mu_1 \mu_3 - \\
& - \mu_5 \mu_2 \mu_1 \mu_4 \mu_3 + \mu_5 \mu_2 \mu_3 \mu_1 \mu_4 + \mu_5 \mu_2 \mu_3 \mu_4 \mu_1 - \mu_5 \mu_2 \mu_4 \mu_1 \mu_3 + \\
& + \mu_1 \mu_4 \mu_3 \mu_2 \mu_5 - \mu_3 \mu_1 \mu_4 \mu_2 \mu_5 - \mu_3 \mu_4 \mu_1 \mu_2 \mu_5 + \mu_4 \mu_1 \mu_3 \mu_2 \mu_5 + \\
& + \mu_1 \mu_4 \mu_3 \mu_5 \mu_2 - \mu_3 \mu_1 \mu_4 \mu_5 \mu_2 - \mu_3 \mu_4 \mu_1 \mu_5 \mu_2 + \mu_4 \mu_1 \mu_3 \mu_5 \mu_2,
\end{array}
\]
\[
\begin{array}{ll}
[\alpha_5,\beta_5]=&-\mu_3 \mu_5 \mu_1 \mu_4 \mu_2 + \mu_3 \mu_5 \mu_2 \mu_1 \mu_4 + \mu_3 \mu_5 \mu_2 \mu_4 \mu_1 - \mu_3 \mu_5 \mu_4 \mu_1 \mu_2 - \\
& - \mu_5 \mu_3 \mu_1 \mu_4 \mu_2 + \mu_5 \mu_3 \mu_2 \mu_1 \mu_4 + \mu_5 \mu_3 \mu_2 \mu_4 \mu_1 - \mu_5 \mu_3 \mu_4 \mu_1 \mu_2 + \\
& + \mu_1 \mu_4 \mu_2 \mu_3 \mu_5 - \mu_2 \mu_1 \mu_4 \mu_3 \mu_5 - \mu_2 \mu_4 \mu_1 \mu_3 \mu_5 + \mu_4 \mu_1 \mu_2 \mu_3 \mu_5 + \\
& + \mu_1 \mu_4 \mu_2 \mu_5 \mu_3 - \mu_2 \mu_1 \mu_4 \mu_5 \mu_3 - \mu_2 \mu_4 \mu_1 \mu_5 \mu_3 + \mu_4 \mu_1 \mu_2 \mu_5 \mu_3.
\end{array}
\]
Using the commutation relations $\mu_1 \mu_2 = -\mu_2 \mu_1$, $\mu_2 \mu_3 = -\mu_3 \mu_2$, $\mu_3 \mu_4 = -\mu_4 \mu_3$, $\mu_4 \mu_5 = -\mu_5 \mu_4$, $\mu_1 \mu_5 = -\mu_5 \mu_1$ in the algebra $H_*(\varOmega (\mathbb{C}P^\infty)^\mathcal{K})$ (see Theorem~\ref{gptw12}) we reduce each summand on the right hand side to the canonical form as follows: take $ \mu_j $ with the minimal index $ j $ (in our case $ \mu_1 $) and move it to the left as far as possible using the commutation relations. Next, take $ \mu_2 $ and move it to the left as far as possible without using the commutation relations with $ \mu_1 $. Proceed in this fashion until we reach $ \mu_5 $. For example, the canonical form of the monomial $ \mu_4 \mu_2 \mu_3 \mu_5 \mu_1 $ is $ \mu_3 \mu_4 \mu_1 \mu_2 \mu_5 $. As a result, we get
\[
\begin{array}{ll}
[\alpha_1, \beta_1] =& -\mu_1 \mu_2 \mu_3 \mu_4 \mu_5 + \mu_1 \mu_4 \mu_2 \mu_3 \mu_5 + \mu_1 \mu_3 \mu_4 \mu_5 \mu_2 - \mu_1 \mu_3 \mu_5 \mu_2 \mu_4 + \\
&+ \mu_3 \mu_1 \mu_2 \mu_4 \mu_5 + \mu_3 \mu_1 \mu_4 \mu_2 \mu_5 + \mu_3 \mu_1 \mu_4 \mu_5 \mu_2 - \mu_3 \mu_1 \mu_5 \mu_2 \mu_4 + \\
&+ \mu_2 \mu_4 \mu_1 \mu_5 \mu_3 - \mu_4 \mu_1 \mu_2 \mu_5 \mu_3 - \mu_4 \mu_1 \mu_5 \mu_2 \mu_3 + \mu_5 \mu_2 \mu_4 \mu_1 \mu_3 - \\
&- \mu_2 \mu_5 \mu_3 \mu_4 \mu_1 - \mu_4 \mu_2 \mu_5 \mu_3 \mu_1 - \mu_5 \mu_3 \mu_4 \mu_1 \mu_2 - \mu_5 \mu_2 \mu_3 \mu_4 \mu_1,
\end{array}
\]

\[
\begin{array}{ll}
[\alpha_2, \beta_2] =& -\mu_1 \mu_4 \mu_2 \mu_5 \mu_3 - \mu_1 \mu_4 \mu_2 \mu_3 \mu_5 - \mu_1 \mu_3 \mu_4 \mu_5 \mu_2 - \mu_1 \mu_4 \mu_5 \mu_2 \mu_3 - \\
& - \mu_4 \mu_1 \mu_2 \mu_5 \mu_3 - \mu_4 \mu_1 \mu_2 \mu_3 \mu_5 + \mu_4 \mu_1 \mu_3 \mu_5 \mu_2 - \mu_4 \mu_1 \mu_5 \mu_2 \mu_3 + \\
& + \mu_2 \mu_5 \mu_3 \mu_1 \mu_4 + \mu_3 \mu_1 \mu_2 \mu_4 \mu_5 - \mu_3 \mu_1 \mu_5 \mu_2 \mu_4 + \mu_5 \mu_3 \mu_1 \mu_2 \mu_4 + \\
& + \mu_2 \mu_5 \mu_3 \mu_4 \mu_1 + \mu_2 \mu_3 \mu_4 \mu_1 \mu_5 - \mu_3 \mu_5 \mu_2 \mu_4 \mu_1 + \mu_5 \mu_2 \mu_3 \mu_4 \mu_1,
\end{array}
\]

\[
\begin{array}{ll}
[\alpha_3, \beta_3] =& + \mu_2 \mu_4 \mu_1 \mu_3 \mu_5 + \mu_2 \mu_4 \mu_1 \mu_5 \mu_3 - \mu_2 \mu_3 \mu_4 \mu_1 \mu_5 - \mu_2 \mu_5 \mu_3 \mu_4 \mu_1 - \\
& - \mu_4 \mu_1 \mu_2 \mu_3 \mu_5 - \mu_4 \mu_1 \mu_2 \mu_5 \mu_3 - \mu_3 \mu_4 \mu_1 \mu_2 \mu_5 - \mu_4 \mu_2 \mu_5 \mu_3 \mu_1 - \\
& - \mu_1 \mu_3 \mu_5 \mu_2 \mu_4 + \mu_1 \mu_5 \mu_2 \mu_3 \mu_4 - \mu_3 \mu_1 \mu_5 \mu_2 \mu_4 + \mu_5 \mu_3 \mu_1 \mu_2 \mu_4 + \\
& + \mu_1 \mu_3 \mu_4 \mu_5 \mu_2 + \mu_1 \mu_4 \mu_5 \mu_2 \mu_3 + \mu_3 \mu_1 \mu_4 \mu_5 \mu_2 + \mu_5 \mu_3 \mu_1 \mu_4 \mu_2,
\end{array}
\]

\[
\begin{array}{ll}
[\alpha_4, \beta_4] =& + \mu_1 \mu_2 \mu_5 \mu_3 \mu_4 + \mu_2 \mu_5 \mu_3 \mu_1 \mu_4 + \mu_2 \mu_5 \mu_3 \mu_4 \mu_1 - \mu_2 \mu_4 \mu_1 \mu_5 \mu_3 + \\
& + \mu_1 \mu_5 \mu_2 \mu_3 \mu_4 + \mu_5 \mu_3 \mu_1 \mu_2 \mu_4 + \mu_5 \mu_2 \mu_3 \mu_4 \mu_1 - \mu_5 \mu_2 \mu_4 \mu_1 \mu_3 - \\
& - \mu_1 \mu_4 \mu_2 \mu_3 \mu_5 - \mu_3 \mu_1 \mu_4 \mu_2 \mu_5 - \mu_3 \mu_4 \mu_1 \mu_2 \mu_5 - \mu_4 \mu_1 \mu_2 \mu_3 \mu_5 - \\
& - \mu_1 \mu_3 \mu_4 \mu_5 \mu_2 - \mu_3 \mu_1 \mu_4 \mu_5 \mu_2 - \mu_3 \mu_4 \mu_1 \mu_5 \mu_2 + \mu_4 \mu_1 \mu_3 \mu_5 \mu_2,
\end{array}
\]

\[
\begin{array}{ll}
[\alpha_5, \beta_5] =& -\mu_3 \mu_1 \mu_4 \mu_5 \mu_2 + \mu_3 \mu_1 \mu_5 \mu_2 \mu_4 + \mu_3 \mu_5 \mu_2 \mu_4 \mu_1 - \mu_3 \mu_4 \mu_1 \mu_5 \mu_2 - \\
& - \mu_5 \mu_3 \mu_1 \mu_4 \mu_2 - \mu_5 \mu_3 \mu_1 \mu_2 \mu_4 - \mu_5 \mu_2 \mu_3 \mu_4 \mu_1 - \mu_5 \mu_3 \mu_4 \mu_1 \mu_2 + \\
& + \mu_1 \mu_4 \mu_2 \mu_3 \mu_5 - \mu_1 \mu_2 \mu_3 \mu_4 \mu_5 - \mu_2 \mu_4 \mu_1 \mu_3 \mu_5 + \mu_4 \mu_1 \mu_2 \mu_3 \mu_5 + \\
& + \mu_1 \mu_4 \mu_2 \mu_5 \mu_3 + \mu_1 \mu_2 \mu_5 \mu_3 \mu_4 - \mu_2 \mu_4 \mu_1 \mu_5 \mu_3 + \mu_4 \mu_1 \mu_2 \mu_5 \mu_3.
\end{array}
\]

Summing up with the appropriate signs we obtain the required relation~\eqref{5sot}.

Now we prove statement (a). Consider the map $f \colon (S^3 \vee S^4)^{\vee 5} \to \mathcal{Z}_\mathcal{K}$ defined as the wedge of the maps corresponding to the generators $\alpha_1, \ldots, \alpha_5$, $\beta_1, \ldots, \beta_5$ as described in the beginning of the proof. Because of the relation~\eqref{5sot}, the map $f$ extends to a map from the connected sum (which differs from the wedge by a single $7$-dimensional cell), as shown in the diagram below:
\[
  \xymatrix{
  (S^3 \vee S^4)^{\vee 5} \ar@{^{(}->}[d] \ar[r]^(.65){f} & \mathcal{Z}_\mathcal{K}\\
  (S^3 \times S^4)^{\#5} \ar^{\widehat{f}}@{-->}[ur] &
  }
\]
The map $\widehat{f}$ induces an isomorphism in homology, so it is a homotopy equivalence, because all spaces are simply connected.

To finish the proof of statement (b) we need to show that~\eqref{5sot} is the only relation on the generators $\alpha_1, \ldots, \alpha_5$, $\beta_1, \ldots, \beta_5$. We constructed a homotopy equivalence between $ \mathcal Z_{\mathcal K} $ and the connected sum of the products of spheres $ X = (S ^ 3 \times S ^ 4) ^ {\# 5} $. The space $X$ is obtained from a wedge of spheres $ X = (S ^ 3 \vee S ^ 4) ^ {\vee 5} $ by attaching a $7$-dimensional cell along a sum of five Whitehead products. By passing to the Pontryagin algebra, we obtain that $H_*(\varOmega X)$ is the quotient of a free algebra on $10$ generators by a single relation, i.e. has exactly the form decribed in statement (b). The homotopy equivalence $X \rightarrow \zk$ implies an isomorphism of Pontryagin algebras, so there are no other relations except \eqref{5sot}.
%
%
\end{proof}
\section{The case of hexagon}

In this section, $ \mathcal K $ is the boundary of hexagon. That is, $\mathcal{K}$ is the simplicial complex on $[6] = \{1, 2, 3, 4, 5, 6\}$ with maximal simplices $\{1,2\}$, $\{2,3\}$, $\{3,4\}$, $\{4,5\}$, $\{5,6\}$, $\{6,1\}$. We describe the cohomology, homology and Pontryagin algebra of $\zk$.

We calculate the cohomology and homology of $\zk$ similarly to the case of pentagon. Then we describe the Pontryagin algebra $H_*(\varOmega \zk)$ by writing down a single relation on the canonical generator set of iterated commutators from Theorem~\ref{TP}. Unlike the case of pentagon, iterated commutators of the form $[ \alpha_i, \alpha_j]$ appear in the relation. These commutators correspond to the Whitehead products $[\widehat{\alpha}_i, \widehat{\alpha}_j] \colon S^5 \rightarrow \zk$, which vanish under the Hurewicz homomorphism and therefore can not be detected by the cohomology ring.

\subsection*{Cohomology}

The cohomology ring structure of $\zk$ can be described using Theorem~\ref{hzk}. The nontrivial cohomology groups are
\begin{align*}
H^0 (\zk) & \cong \widetilde{H}^{-1} (\mathcal{K}_\varnothing) \cong k,\\
H^3 (\zk) & \cong \bigoplus_{|I|=2} \widetilde{H}^0 (\mathcal{K}_I) \cong k^9,\\
H^4 (\zk) & \cong \bigoplus_{|I|=3} \widetilde{H}^0 (\mathcal{K}_I) \cong k^{16},\\
H^5 (\zk) & \cong \bigoplus_{|I|=4} \widetilde{H}^0 (\mathcal{K}_I) \cong k^9,\\
H^8 (\zk) & \cong \widetilde{H}^1 (\mathcal{K}) \cong k.
\end{align*}

A generator of $H^8(\zk)$ is represented by any monomial $u_i u_{i+1} u_{i+2} v_{i+3} v_{i+4} v_{i+5} \in \varLambda [u_1, \ldots, u_6] \otimes k[\mathcal{K}]$, where the indices are considered modulo~$6$. All these monomials represent the same cohomology classes.

We denote $ t = [u_1 u_2 u_3 v_4 v_5 v_6] \in H^8 (\zk)$ and calculate the product in the cohomology ring.
We choose bases for $H^3(\zk)$ and $H^5(\zk)$ as shown in Tables~\ref{zk6prod35},~\ref{zk6prod44}. For any given basis element of $H^3(\zk)$ there is a unique basis element of $H^5(\zk)$ such that the product of these two elements is $t$. The product of any other two elements of $H^3(\zk)$ and $H^5(\zk)$ is zero. Similarly for $H^4(\zk)$.


\begin{table}[h]
\caption{Cohomology classes and their products for $H^3(\zk)$ and $H^5(\zk)$.}\label{zk6prod35}
\begin{center}
\begin{tabular}{c|c|c}
$H^3(\zk)$ & $H^5(\zk)$ & Product \\
\hline
$ [u_1 v_3] $ & $ [u_4 u_5 u_6 v_2] $ & $t$ \\
\hline
$ [u_1 v_4] $ & $ - [u_2 u_3 u_5 v_6] + [u_2 u_3 u_6 v_5] $ & $t$ \\
\hline
$ [u_1 v_5] $ & $ [u_2 u_3 u_4 v_6] $ & $t$ \\
\hline
$ [u_2 v_4] $ & $  - [u_1 u_5 u_6 v_3] $ & $t$ \\
\hline
$ [u_2 v_5] $ & $ - [u_1 u_3 u_6 v_4] + [u_1 u_4 u_6 v_3] $ & $t$ \\
\hline
$ [u_2 v_6] $ & $ - [u_3 u_4 u_5 v_1] $ & $t$ \\
\hline
$ [u_3 v_5] $ & $ [u_1 u_2 u_6 v_4] $ & $t$ \\
\hline
$ [u_3 v_6] $ & $ [u_1 u_2 u_4 v_5] - [u_1 u_2 u_5 v_4] $ & $t$ \\
\hline
$ [u_4 v_6] $ & $ - [u_1 u_2 u_3 v_5] $ & $t$ \\
\end{tabular}
\end{center}
\end{table}
\begin{table}[h]
\caption{Cohomology classes and their products for $H^4(\zk)$.}\label{zk6prod44}
\begin{center}
\begin{tabular}{c|c|c}
$H^4(\zk)$ & $H^4(\zk)$ & Product \\
\hline
$ [u_1 u_5 v_3] $ & $ - [u_2 u_6 v_4] $ & $t$ \\
\hline
$ [u_3 u_5 v_1] $ & $ - [u_4 u_6 v_2] + [u_2 u_6 v_4] $ & $t$ \\
\hline
$ [u_2 u_3 v_6] $ & $ -[u_4 u_5 v_1] $ & $t$ \\
\hline
$ [u_5 u_6 v_2] $ & $ [u_3 u_4 v_1] $ & $t$ \\
\hline
$ [u_1 u_6 v_3] $ & $ [u_4 u_5 v_2] $ & $t$ \\
\hline
$ [u_3 u_4 v_6] $ & $ - [u_2 u_5 v_1] + [u_1 u_5 v_2] $ & $t$ \\
\hline
$ [u_5 u_6 v_3] $ & $ - [u_2 u_4 v_1] + [u_1 u_4 v_2] $ & $t$ \\
\hline
$ [u_1 u_6 v_4] $ & $ - [u_3 u_5 v_2] + [u_2 u_5 v_3] $ & $t$ \\
\end{tabular}
\end{center}
\end{table}

Note that, unlike the case of pentagon, some cohomology classes in $H^4(\zk)$ and $H^5(\zk)$ can not be represented by monomials in the Koszul algebra $\varLambda[u_1, \ldots, u_6]~\otimes~k[\mathcal{K}]$.

\subsection*{Homology}
%
The cellular cycles dual to the cohomology classes in Table~\ref{zk6prod35} are shown in Table~\ref{zk6hombas35}, and the cellular cycles dual to the cohomology classes  in Table~\ref{zk6prod44} are shown in Table~\ref{zk6hombas44}.
\begin{table}[h]
\caption{Cellular cycles representing basis homology classes for $H_3(\zk)$ and $H_5(\zk)$.}\label{zk6hombas35}
\begin{center}
\begin{tabular}{c|c}
$H_3(\zk)$ & $H_5(\zk)$ \\
\hline
$S_1 D_3 + D_1 S_3$ & $ D_2 S_4 S_5 S_6 + S_2 S_4 S_5 D_6 $ \\
\hline
$S_1 D_4 + D_1 S_4$ & $ - S_2 S_3 S_5 D_6 - D_2 S_3 S_5 S_6 $ \\
\hline
$S_1 D_5 + D_1 S_5$ & $ D_2 S_3 S_4 S_6 + S_2 S_3 S_4 D_6 $ \\
\hline
$S_2 D_4 + D_2 S_4$ & $ S_1 S_3 S_5 D_6 - S_1 D_3 S_5 S_6 $ \\
\hline
$S_2 D_5 + D_2 S_5$ & $ S_1 D_3 S_4 S_6 - S_1 S_3 S_4 D_6 $ \\
\hline
$S_2 D_6 + D_2 S_6$ & $ - D_1 S_3 S_4 S_5 - S_1 S_3 S_4 D_5 $ \\
\hline
$S_3 D_5 + D_3 S_5$ & $ S_1 S_2 D_4 S_6 + S_1 S_2 S_4 D_6 $ \\
\hline
$S_3 D_6 + D_3 S_6$ & $ D_1 S_2 S_4 S_5 + S_1 S_2 S_4 D_5 $ \\
\hline
$S_4 D_6 + D_4 S_6$ & $ - S_1 S_2 S_3 D_5 - D_1 S_2 S_3 S_5 $ \\
\end{tabular}
\end{center}
\end{table}
\begin{table}[h]
\caption{Cellular cycles representing basis homology classes for $H_4(\zk)$.}\label{zk6hombas44}
\begin{center}
\begin{tabular}{c|c}
$H_4(\zk)$ & $H_4(\zk)$ \\
\hline
$ S_1 S_3 D_5 + S_1 D_3 S_5$ & $ - D_2 S_4 S_6 - S_2 D_4 S_6 $ \\
\hline
$ - S_1 S_3 D_5 + D_1 S_3 S_5$ & $ S_2 S_4 D_6 - D_2 S_4 S_6 $ \\
\hline
$ S_2 S_3 D_6 - D_2 S_3 S_6$ & $ S_1 S_4 D_5 - D_1 S_4 S_5 $ \\
\hline
$ D_2 S_5 S_6 - S_2 S_5 D_6$ & $ - S_1 S_3 D_4 + D_1 S_3 S_4 $ \\
\hline
$ S_1 D_3 S_6 + S_1 S_3 D_6$ & $ - S_2 S_4 D_5 + D_2 S_4 S_5 $ \\
\hline
$ S_3 S_4 D_6 - D_3 S_4 S_6$ & $ S_1 S_2 D_5 - D_1 S_2 S_5 $ \\
\hline
$ - S_3 S_5 D_6 + D_3 S_5 S_6$ & $ - D_1 S_2 S_4 + S_1 S_2 D_4 $ \\
\hline
$ S_1 S_4 D_6 + S_1 D_4 S_6$ & $ - D_2 S_3 S_5 + S_2 S_3 D_5 $ \\
\end{tabular}
\end{center}
\end{table}

{\samepage

\subsection*{Pontryagin algebra}
Similarly to the case of pentagon we use Lemma~\ref{lemcomhu} to write down the commutators correposponding to the cellular chains in Tables~\ref{zk6hombas35},~\ref{zk6hombas44}. These are given in the first two columns of Tables~\ref{com6_35},~\ref{com6_44}. However, unlike the case of pentagon the Hurewicz homomorphism $\pi_5(\zk) \rightarrow H_5(\zk)$ is not injective; its kernel contains commutators of the form $[[\mu_i, \mu_j], [\mu_k, \mu_l]]$. These ``additional" commutators are given in the third column of Table~\ref{com6_35} with some unknown coefficients~$k_i$.

\begin{table}[h]
\caption{Commutators in $H_2(\varOmega\zk)$ and $H_4(\varOmega\zk)$.}\label{com6_35}
\begin{center}
\begin{tabular}{c|c|c}
$H_2(\varOmega \zk)$ & $H_4(\varOmega \zk)$ & Additional commutators \\
\hline
$[\mu_3, \mu_1]$ & $ [\mu_4, [\mu_5, [\mu_6, \mu_2]]]$ & $k_1 [[\mu_2, \mu_5], [\mu_4, \mu_6]]$ \\
\hline
$[\mu_4, \mu_1]$ & $ - [\mu_3, [\mu_5, [\mu_6, \mu_2]]]$ & $k_2 [[\mu_5, \mu_3], [\mu_6, \mu_2]] + k_3 [[\mu_6, \mu_3], [\mu_5, \mu_2]]$ \\
\hline
$[\mu_5, \mu_1]$ & $ [\mu_3, [\mu_4, [\mu_6, \mu_2]]]$ & $k_4 [[\mu_4, \mu_2], [\mu_6, \mu_3]]$ \\
\hline
$[\mu_4, \mu_2]$ & $ - [\mu_1, [\mu_5, [\mu_6, \mu_3]]]$ & $k_5 [[\mu_6, \mu_3], [\mu_5, \mu_1]]$ \\
\hline
$[\mu_5, \mu_2]$ & $ [\mu_1, [\mu_4, [\mu_6, \mu_3]]]$ & $k_6 [[\mu_4, \mu_1], [\mu_6, \mu_3]] + k_7 [[\mu_6, \mu_4], [\mu_3, \mu_1]]$ \\
\hline
$[\mu_6, \mu_2]$ & $ - [\mu_3, [\mu_4, [\mu_5, \mu_1]]]$ & $k_8 [[\mu_5, \mu_3], [\mu_4, \mu_1]]$ \\
\hline
$[\mu_5, \mu_3]$ & $[\mu_1, [\mu_2, [\mu_6, \mu_4]]]$ & $k_9 [[\mu_4, \mu_1], [\mu_6, \mu_2]]$ \\
\hline
$[\mu_6, \mu_3]$ & $[\mu_2, [\mu_4, [\mu_5, \mu_1]]]$ & $k_{10} [[\mu_4, \mu_2], [\mu_5, \mu_1]] + k_{11} [[\mu_4, \mu_1], [\mu_5, \mu_2]]$ \\
\hline
$[\mu_6, \mu_4]$ & $ - [\mu_2, [\mu_3, [\mu_5, \mu_1]]]$ & $k_{12} [[\mu_5, \mu_2], [\mu_3, \mu_1]]$ \\
\end{tabular}
\end{center}
\end{table}
\begin{table}[h]
\caption{Commutators in $H_3(\varOmega\zk)$.}\label{com6_44}
\begin{center}
\begin{tabular}{c|c}
$H_3(\varOmega \zk)$ & $H_3(\varOmega \zk)$ \\
\hline
$ [\mu_1, [\mu_5, \mu_3]] $ & $ [\mu_6, [\mu_4, \mu_2]] $ \\
\hline
$ [\mu_3, [\mu_5, \mu_1]] $ & $ - [\mu_4, [\mu_6, \mu_2]] $ \\
\hline
$ - [\mu_3, [\mu_6, \mu_2]] $ & $ - [\mu_4, [\mu_5, \mu_1]] $ \\
\hline
$ [\mu_5, [\mu_6, \mu_2]] $ & $ [\mu_3, [\mu_4, \mu_1]] $ \\
\hline
$ [\mu_1, [\mu_6, \mu_3]] $ & $ [\mu_4, [\mu_5, \mu_2]] $ \\
\hline
$ - [\mu_4, [\mu_6, \mu_3]] $ & $ - [\mu_2, [\mu_5, \mu_1]] $ \\
\hline
$ [\mu_5, [\mu_6, \mu_3]] $ & $ - [\mu_2, [\mu_4, \mu_1]] $ \\
\hline
$ [\mu_1, [\mu_6, \mu_4]] $ & $ - [\mu_3, [\mu_5, \mu_2]] $ \\
\end{tabular}
\end{center}
\end{table}
}

{\samepage
\begin{theorem}\label{thm6}
Let $\mathcal{K}$ be the boundary of hexagon.
\begin{itemize}
\item[(a)] There is a homotopy equivalence:
$$\mathcal{Z}_\mathcal{K} \simeq (S^3 \times S^5)^{\#9} \# (S^4 \times S^4)^{\#8}.$$
\item[(b)] $H_* (\varOmega \mathcal{Z}_\mathcal{K})$ is an algebra with $ 34 $ generators:
\[
\alpha_1=[\mu_3, \mu_1], \alpha_2=[\mu_4, \mu_1], \alpha_3=[\mu_5, \mu_1], \alpha_4=[\mu_4, \mu_5], \alpha_5=[\mu_5, \mu_2],
\]
\[
\alpha_6=[\mu_6, \mu_2], \alpha_7=[\mu_5, \mu_3], \alpha_8=[\mu_6, \mu_3], \alpha_9=[\mu_6, \mu_4],
\]
\[
\beta_1=[\mu_1, [\mu_5, \mu_3]], \beta_2=[\mu_3, [\mu_5, \mu_1]], \beta_3=[\mu_3, [\mu_6, \mu_2]], \beta_4=[\mu_5, [\mu_6, \mu_2]],
\]
\[
\beta_5=[\mu_1, [\mu_6, \mu_3]], \beta_6=[\mu_4, [\mu_6, \mu_3]], \beta_7=[\mu_5, [\mu_6, \mu_3]], \beta_8=[\mu_1, [\mu_6, \mu_4]],
\]
\[
\delta_1=[\mu_6, [\mu_4, \mu_2]], \delta_2=[\mu_4, [\mu_6, \mu_2]], \delta_3=[\mu_4, [\mu_5, \mu_1]], \delta_4=[\mu_3, [\mu_4, \mu_1]],
\]
\[
\delta_5=[\mu_4, [\mu_5, \mu_2]], \delta_6=[\mu_2, [\mu_5, \mu_1]], \delta_7=[\mu_2, [\mu_4, \mu_1]], \delta_8=[\mu_3, [\mu_5, \mu_2]],
\]
\[
\gamma_1=[\mu_4, [\mu_5, [\mu_6, \mu_2]]], \gamma_2=[\mu_3, [\mu_5, [\mu_6, \mu_2]]], \gamma_3=[\mu_3, [\mu_4, [\mu_6, \mu_2]]],
\]
\[
\gamma_4=[\mu_1, [\mu_5, [\mu_6, \mu_3]]],  \gamma_5=[\mu_1, [\mu_4, [\mu_6, \mu_3]]], \gamma_6=[\mu_3, [\mu_4, [\mu_5, \mu_1]]],
\]
\[
\gamma_7=[\mu_1, [\mu_2, [\mu_6, \mu_4]]], \gamma_8=[\mu_2, [\mu_4, [\mu_5, \mu_1]]], \gamma_9=[\mu_2, [\mu_3, [\mu_5, \mu_1]]],
\]
that satisfy a single relation:
\begin{equation}\label{rel6}
\sum_{i=1}^9{ [\alpha_i, \gamma_i^\prime] } + \sum_{j=1}^8{ \sigma_j \cdot [\beta_j, \delta_j] } = 0,
\end{equation}
\newpage
where
\[
\deg \alpha_i=2, \deg \beta_i=\deg \delta_i=3, \deg \gamma_i=4,
\]
\[
\gamma_1^\prime = - \gamma_1 + [\alpha_5, \alpha_9], \gamma_2^\prime = \gamma_2 + [\alpha_7, \alpha_6] - [\alpha_8, \alpha_5], \gamma_3^\prime = - \gamma_3 + [\alpha_4, \alpha_8],
\]
\[
\gamma_4^\prime = \gamma_4 - [\alpha_8, \alpha_3], \gamma_5^\prime = - \gamma_5 + [\alpha_2, \alpha_8] - [\alpha_9, \alpha_1], \gamma_6^\prime = \gamma_6 - [\alpha_7, \alpha_2],
\]
\[
\gamma_7^\prime = - \gamma_7 + 0 \cdot [\alpha_2, \alpha_6], \gamma_8^\prime = - \gamma_8 + 0 \cdot [\alpha_4, \alpha_3] + 0 \cdot [\alpha_2, \alpha_5], \gamma_9^\prime = \gamma_9 + 0 \cdot [\alpha_5, \alpha_1],
\]
\[
    \begin{matrix}
    \sigma_j & =
    & \left\{
    \begin{matrix}
    -1, & j \in \{ 2, 7, 8 \} \\
    1, & j \in \{ 1, 3, 4, 5, 6 \}
    \end{matrix} \right..
    \end{matrix}
\]
\end{itemize}
\end{theorem}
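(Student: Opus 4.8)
The plan is to follow the three-step strategy of the proof of Theorem~\ref{thm5}, the new work coming from the fact that, unlike for the pentagon, the relation now contains iterated commutators of the $\alpha_i$ that are invisible to (co)homology. \textbf{Step 1:} check that the $34$ displayed commutators are exactly the multiplicative generators of $H_*(\varOmega\zk)$ produced by Theorem~\ref{TP}. Since the boundary of the hexagon is flag (every minimal non-face has two elements), this is a finite combinatorial verification: one runs through all admissible index strings $k_1<\dots<k_p<j>i$ and tests, against the full subcomplexes $\mathcal K_{\{k_1,\dots,k_p,j,i\}}$ of the hexagon boundary, whether $i$ is the smallest vertex of the connected component not containing $j$. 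This yields the nine degree-$2$ generators $\alpha_i$, the sixteen degree-$3$ generators $\beta_j,\delta_j$, and the nine degree-$4$ generators $\gamma_i$, and no others. \textbf{Step 2:} establish the relation~\eqref{rel6}. \textbf{Step 3:} deduce (a) and (b), exactly as for the pentagon.

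Step~2 is where essentially all the work lies, and it has a ``visible'' and an ``invisible'' half. For the visible half, Theorem~\ref{hzk} together with the cohomology computation above shows that $\zk$ has cells only in dimensions $0,3,4,5,8$, so its $7$-skeleton is the wedge $W=(S^3\vee S^5)^{\vee 9}\vee(S^4\vee S^4)^{\vee 8}$ and $\zk=W\cup_\phi e^8$ for some attaching map $\phi\in\pi_7(W)$. Tables~\ref{zk6prod35} and~\ref{zk6prod44} say precisely that the cup-product pairings $H^3\otimes H^5\to H^8$ and $H^4\otimes H^4\to H^8$ are perfect, with the tabulated partners and signs, and that all other products vanish; hence the class of $\phi$ in the abelianization of $\pi_7(W)$ is the sum of Whitehead products pairing each $S^3$-summand with its $S^5$-partner and each $S^4$-summand with its $S^4$-partner. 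Running Lemma~\ref{lemcomhu} to match $\alpha_i,\beta_j,\delta_j,\gamma_i$ with the cellular cycles of Tables~\ref{zk6hombas35} and~\ref{zk6hombas44}, this identifies the ``leading term'' $\sum_{i=1}^9[\alpha_i,\gamma_i]+\sum_{j=1}^8\sigma_j[\beta_j,\delta_j]$ of the relation, with the signs $\sigma_j$ read off the product tables.

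The invisible half is the main obstacle. The attaching map $\phi$ can, and in fact does, differ from its leading term by classes that die under the Hurewicz homomorphism, and in the single multidegree $\{1,\dots,6\}$ carrying the relation these are built from the Whitehead products $[\widehat{\alpha}_p,\widehat{\alpha}_q]$, i.e.\ the brackets $[[\mu_a,\mu_b],[\mu_c,\mu_d]]$ whose index set matches that of the corresponding $\gamma_i$; these are the correction terms that turn $\gamma_i$ into $\gamma_i'$, and neither the homology of $\zk$ nor its cohomology ring detects them. To pin them down I would use Theorem~\ref{gptw12} to expand each of $\alpha_i,\beta_j,\delta_j,\gamma_i$ and each candidate correction term in the tensor algebra $T\langle\mu_1,\dots,\mu_6\rangle$, reduce every degree-$6$ monomial (necessarily a permutation of $\mu_1\mu_2\mu_3\mu_4\mu_5\mu_6$) to canonical form by pushing $\mu_1$ as far left as possible, then $\mu_2$, and so on, using only the hexagon-edge relations $\mu_i\mu_{i+1}+\mu_{i+1}\mu_i=0$ (indices mod $6$), and then solve the resulting linear system over $k$ for the unique coefficients making $\sum_i[\alpha_i,\gamma_i']+\sum_j\sigma_j[\beta_j,\delta_j]$ vanish. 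A solution exists because $H_*(\varOmega\zk)$ is a one-relator algebra (a consequence of (a), or visible directly from the manifold structure of $\zk$), and the solution --- several of whose correction coefficients turn out to be $0$, as recorded --- is the relation~\eqref{rel6}. This computation is an order of magnitude larger than its pentagon analogue and is best delegated to a computer algebra system; I expect it to be the only genuine difficulty.

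With~\eqref{rel6} in hand, Step~3 runs as in the pentagon case. Let $f\colon W\to\zk$ be the wedge of the $34$ maps realizing the generators. The connected sum $X=(S^3\times S^5)^{\#9}\#(S^4\times S^4)^{\#8}$ is obtained from $W$ by attaching a single $8$-cell along the sum of Whitehead products corresponding to the left-hand side of~\eqref{rel6}, so the relation shows that $f$ extends to $\widehat{f}\colon X\to\zk$. Since the $34$ generators form bases of $H_3(\zk)$, $H_4(\zk)$, $H_5(\zk)$, the map $f$ is a homology isomorphism through dimension $7$, and $\widehat{f}$ is an isomorphism on $H_8$ as well; both $X$ and $\zk$ are simply connected, so $\widehat{f}$ is a homotopy equivalence, which proves (a) and re-proves McGavran's theorem. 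Passing to loop space homology, $H_*(\varOmega X)$ is, by the construction of $X$ from a wedge of spheres and one cell, the free graded algebra on the $34$ generators modulo the single relation~\eqref{rel6}, so $\widehat{f}$ transports this presentation to $H_*(\varOmega\zk)$, giving (b) with no further relations.
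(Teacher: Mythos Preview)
Your proposal is correct and follows essentially the same approach as the paper: identify the leading terms of the relation from the cohomology product tables and Lemma~\ref{lemcomhu}, then determine the Hurewicz-invisible correction terms $[\alpha_p,\alpha_q]$ by expanding all commutators in $T\langle\mu_1,\dots,\mu_6\rangle$, reducing monomials to canonical form via the hexagon-edge relations, and solving the resulting linear system for the unknown coefficients with a computer algebra system. The paper's own proof is terser than yours but proceeds identically, delegating the linear-algebra step to \cite{Wolfram} and then constructing $\widehat f$ and concluding exactly as you describe in Step~3.
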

}

\begin{proof}
The scheme of proof is similar to the case of pentagon (Theorem~\ref{thm5}). The main step is to deduce the relation~\eqref{rel6}.
We expand each commutator $[\mu_i, \mu_j]$, $[\mu_i, [\mu_j, \mu_k]]$, $[\mu_l, [\mu_k, [\mu_i, \mu_j]]]$ from the definition of $\alpha_i$, $\beta_i$, $\gamma_i$, $\delta_i$ in the tensor algebra $T\langle \mu_1, \ldots, \mu_6 \rangle$ and reduce each monomial to the canonical form. Then do the same with the additional commutators from the third column of Table~\ref{com6_35}. Summing up the resulting expressions and equating to zero we get a system of linear equations on the unknown coefficients $k_i$. Solving this system using~\cite{Wolfram} we obtain the following relation on the iterated commutators:
{\small \begin{align*}
&- [[\mu_3, \mu_1], [\mu_4, [\mu_5, [\mu_6, \mu_2]]]] + [[\mu_3, \mu_1], [[\mu_2, \mu_5], [\mu_4, \mu_6]]] + [[\mu_4, \mu_1], [\mu_3, [\mu_5, [\mu_6, \mu_2]]]] +\\
&+ [[\mu_4, \mu_1], [[\mu_5, \mu_3], [\mu_6, \mu_2]]] - [[\mu_4, \mu_1], [[\mu_6, \mu_3], [\mu_5, \mu_2]]] - [[\mu_5, \mu_1], [\mu_3, [\mu_4, [\mu_6, \mu_2]]]] +\\
&+ [[\mu_5, \mu_1], [[\mu_4, \mu_2], [\mu_6, \mu_3]]] + [[\mu_4, \mu_2], [\mu_1, [\mu_5, [\mu_6, \mu_3]]]] - [[\mu_4, \mu_2], [[\mu_6, \mu_3], [\mu_5, \mu_1]]] -\\
&- [[\mu_5, \mu_2], [\mu_1, [\mu_4, [\mu_6, \mu_3]]]] + [[\mu_5, \mu_2], [[\mu_4, \mu_1], [\mu_6, \mu_3]]] - [[\mu_5, \mu_2], [[\mu_6, \mu_4], [\mu_3, \mu_1]]] +\\
&+ [[\mu_6, \mu_2], [\mu_3, [\mu_4, [\mu_5, \mu_1]]]] - [[\mu_6, \mu_2], [[\mu_5, \mu_3], [\mu_4, \mu_1]]] - [[\mu_5, \mu_3], [\mu_1, [\mu_2, [\mu_6, \mu_4]]]] -\\
&- [[\mu_6, \mu_3], [\mu_2, [\mu_4, [\mu_5, \mu_1]]]] + [[\mu_6, \mu_4], [\mu_2, [\mu_3, [\mu_5, \mu_1]]]] + [[\mu_1, [\mu_5, \mu_3]], [\mu_6, [\mu_4, \mu_2]]] -\\
&- [[\mu_3, [\mu_5, \mu_1]], [\mu_4, [\mu_6, \mu_2]]] + [[\mu_3, [\mu_6, \mu_2]], [\mu_4, [\mu_5, \mu_1]]] + [[\mu_5, [\mu_6, \mu_2]], [\mu_3, [\mu_4, \mu_1]]] +\\
&+ [[\mu_1, [\mu_6, \mu_3]], [\mu_4, [\mu_5, \mu_2]]] + [[\mu_4, [\mu_6, \mu_3]], [\mu_2, [\mu_5, \mu_1]]] - [[\mu_5, [\mu_6, \mu_3]], [\mu_2, [\mu_4, \mu_1]]] -\\
&- [[\mu_1, [\mu_6, \mu_4]], [\mu_3, [\mu_5, \mu_2]]] = 0.
\end{align*}}%
This is equivalent to~\eqref{rel6}. The rest of the proof is the same as for Theorem~\ref{thm5}: we use the relation~\eqref{rel6} to construct a map
$$
\widehat{f} \colon (S^3 \times S^5)^{\#9} \# (S^4 \times S^4)^{\#8} \rightarrow \zk
$$
and show that it is a homotopy equivalence.

Remark that signs before commutators are equivalent to signs in Table~\ref{com6_44} and opposite to signs in Table~\ref{com6_35}.
\end{proof}


\end{document}